\documentclass[a4paper,10pt,reqno]{amsart}

\usepackage{amsthm,amsmath,amsfonts,amssymb,graphics}

\voffset=-1.5cm \textheight=23cm \hoffset=-.5cm \textwidth=16cm
\oddsidemargin=1cm \evensidemargin=-.1cm
\footskip=35pt \linespread{1.25}
\parindent=20pt


\def\eps{\varepsilon}
\def\C{\mathcal{C}}
\def\R{\mathbb{R}}
\def\RR{\mathcal{R}}
\def\H{\mathcal{H}}

\theoremstyle{plain}
\newtheorem{thm}{Theorem}[section]
\newtheorem{lem}[thm]{Lemma}
\newtheorem{prop}[thm]{Proposition}
\newtheorem{cor}[thm]{Corollary}

\theoremstyle{definition}
\newtheorem{defin}[thm]{Definition}

\theoremstyle{remark}
\newtheorem{rem}[thm]{Remark}

\title[On the generalized Cheeger problem and an application to 2d strips]{On the generalized Cheeger problem\\ and an application to 2d strips}

\author{Aldo Pratelli}
\email{pratelli@math.fau.de}
\author{Giorgio Saracco}
\email{giorgio.saracco@unife.it}

\begin{document}

\begin{abstract}
In this paper we consider the generalization of the Cheeger problem which comes by considering the ratio between the perimeter and a certain power of the volume. This generalization has been already sometimes treated, but some of the main properties were still not studied, and our main aim is to fill this gap. We will show that most of the first important properties of the classical Cheeger problem are still valid, but others fail; more precisely, long and thin rectangles will give a counterexample to the property of Cheeger sets of being the union of all the balls of a certain radius, as well as to the uniqueness. The shape of Cheeger set for rectangles and strips is then studied as well as their Cheeger constant.
\end{abstract}

 \hspace{-3cm}
 {
 \begin{minipage}[t]{0.5\linewidth}
 \begin{scriptsize}
 \vspace{-2cm}
 This is a pre-print of an article published in \emph{Rev. Mat. Iberoamericana}. The final authenticated version is available online at: http://dx.doi.org/10.4171/RMI/934
 \end{scriptsize}
\end{minipage} 
}

\maketitle

\section*{Introduction} 

The celebrated Cheeger problem consists in searching for sets $E\subseteq \R^n$ minimizing the ratio
\begin{equation}\label{chst}
\inf_{E \subseteq \Omega} \frac{P(E)}{|E|}\,,
\end{equation}
where $\Omega\subseteq \R^n$ is a given open set of finite volume. Throughout the paper, we will write $|E|$ to denote the volume (i.e., the Lebesgue $\mathcal{L}^n$ measure) of any Borel set $E$, while $P(E)$ will be its perimeter, that is, the $\H^{n-1}$ Hausdorff measure of its reduced boundary $\partial^* E$ (for definitions and properties of sets of finite perimeter, the reader can look in~\cite{AmFuPa}). The most interesting questions in the Cheeger problem regard existence, uniqueness and main features of the \emph{Cheeger sets}, that are the sets $E$ realizing the above infimum. Notice that any set $E$ which is a Cheeger set for some $\Omega\supseteq E$ is also a Cheeger set for itself. Since the literature on this problem is huge and well known, we do not try to give here a complete list: the interested author can find a description of the history and a good bibliography in the papers~\cite{Parini,Leonardi}.\par

In this paper, we are interested in the following generalization of the Cheeger problem,
\begin{equation}\label{CheegerPbl}
h_\alpha (\Omega) := \inf_{E \subseteq \Omega} \frac{P(E)}{|E|^{1/\alpha}}\,,
\end{equation}
for $\alpha>1$. Notice that increasing $\alpha$ means that the isoperimetric properties of the sets become ``more important''. This problem is called the ``$\alpha$-Cheeger problem'', the above ratio is called ``$\alpha$-Cheeger ratio of $E$'', and any set realizing the infimum is called an ``$\alpha$-Cheeger set''. Actually, the problem is interesting only if $\alpha<1^*=n/(n-1)$, as we will see later. This generalized Cheeger problem has been already considered in the literature, see for instance~\cite{FMP09,FiMaPr} and the references therein for some general results. However, up to our knowledge, some of the main basic properties were not studied or not proved: the main aim of the present paper is to fill this gap. In particular, we will first concentrate, in Section~\ref{apriori}, on the very basic preliminary properties of the Cheeger problem, and we check how they generalize to the $\alpha$-Cheeger problem. Then, in Section~\ref{existence} we show that the existence of an $\alpha$-Cheeger set $E$ for any set $\Omega$ is always true. Section~\ref{rectangles} is devoted to studying more extensively the case of the rectangles, and checking how their Cheeger sets behave; indeed, as we will see, long and thin rectangles are a counterexample to the known property of the classical Cheeger problem that, if $\Omega$ is convex, then its Cheeger set is unique, and it is the union of all the balls of a certain radius contained in $\Omega$. Finally, in the last Section~\ref{strips} we generalize our observations to the case of the strips, which have been studied a lot in literature lately.

\section{Preliminary properties\label{apriori}}

In this first section, we give a list of well known properties of the classical Cheeger problem, and see how to generalize them for the $\alpha$-Cheeger problem; most of them will be straightforward generalizations.

\begin{thm}\label{classical}
Let us denote by $\C^1_\Omega$ a Cheeger set for $\Omega$ and by $h_1(\Omega)$ the infimum in~(\ref{chst}). Then, the following properties hold:
\begin{itemize}
\item[{\bf 1.}] The Cheeger problem is scale invariant while the Cheeger constant is not; more precisely, for any $t>0$, the Cheeger sets in $t\Omega$ are precisely the sets $tE$ for all the Cheeger sets $E$ in $\Omega$; consequently, $h(t\Omega)=t^{-1} h(\Omega)$;
\item[{\bf 2.}] The constrained boundary of any Cheeger set, i.e. $\partial \C^1_\Omega \cap \partial \Omega$, contains at least two points;
\item[{\bf 3.}] The free boundary of any Cheeger set, i.e. $\partial \C^1_\Omega \cap \Omega$, is analytic possibly except for a closed singular set whose Hausdorff dimension does not exceed $n-8$;
\item[{\bf 4.}] The mean curvature of the free boundary is constant at every regular point and, for $n=2$, it equals $h_1(\Omega)$;
\item[{\bf 5.}] The free boundary of a Cheeger set meets $\partial\Omega$ tangentially at the regular points of $\partial \Omega$: more precisely, if the boundary of a Cheeger set cointains a point $x\in\partial\Omega$ at which the normal vector to $\partial\Omega$ is defined, then also the normal vector to the Cheeger set is defined at $x$, and the two vectors coincide;
\item[{\bf 6.}] A Cheeger set of $\Omega \subseteq \mathbb{R}^2$ can not have corners with an angle smaller than $\pi$;
\item[{\bf 7.}] If a Cheeger set exists, then any of its connected components is also a Cheeger set;
\item[{\bf 8.}] If $\Omega \subseteq \R^2$ is convex, then its Cheeger set is unique and convex; in particular, it is the union of all the balls of radius $1/h_1(\Omega)$ which are contained in $\Omega$;
\item[{\bf 9.}] If $\Omega_1 \subseteq \Omega_2$ then $h_1(\Omega_1) \geq h_1(\Omega_2)$ but the strict inclusion does not imply the strict inequality.
\end{itemize}
\end{thm}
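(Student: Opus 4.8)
Since all nine items are classical facts about the Cheeger problem, the plan is to treat them one by one, giving short self-contained arguments for the elementary ones and deducing the analytically heavy ones from known regularity and structure theory. The common engine behind several items is the behaviour of the ratio under homotheties: for $t>0$ one has $P(tE)=t^{n-1}P(E)$ and $|tE|=t^{n}|E|$, so the Cheeger ratio of $tE$ equals $t^{-1}$ times that of $E$. This immediately gives item~{\bf 1} (the infimum problem is unchanged if $\Omega$ is rescaled, since competitors rescale accordingly, whereas $h_1(t\Omega)=t^{-1}h_1(\Omega)$), and, as a by-product, it shows that a Cheeger set $\C^1_\Omega$ can never be compactly contained in $\Omega$: enlarging it by a small homothety centred at one of its points would keep it inside $\Omega$ while strictly lowering the ratio.

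For item~{\bf 2} I would first use the previous remark to exclude $\partial\C^1_\Omega\cap\partial\Omega=\emptyset$, and then rule out a single contact point $p$ by a localized version of the same idea: away from $p$ the set is compactly contained in $\Omega$, so a small translation (or a homothety centred near $p$) that pushes the part of $\C^1_\Omega$ near $p$ into $\Omega$ produces a competitor with the \emph{same} ratio which is now compactly contained, contradicting what we just proved; here one uses that $\Omega$ is not too wild near $p$. Item~{\bf 7} follows from the mediant inequality: if $\C^1_\Omega=\bigsqcup_i E_i$ is the decomposition into connected components, then $P(\C^1_\Omega)/|\C^1_\Omega|=(\sum_iP(E_i))/(\sum_i|E_i|)$ is a weighted average of the ratios $P(E_i)/|E_i|$, each of which is $\geq h_1(\Omega)$; since the average equals $h_1(\Omega)$, every term must equal $h_1(\Omega)$, i.e.\ every component is itself a Cheeger set. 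Item~{\bf 9} splits into the trivial monotonicity ($\Omega_1\subseteq\Omega_2$ means every competitor for $\Omega_1$ competes for $\Omega_2$) and an explicit example -- for instance a ball versus the union of that ball with a disjoint, strictly smaller ball -- where the strict inclusion leaves both the Cheeger set and $h_1$ untouched. Item~{\bf 6} is the corner-cutting estimate: at a corner of $\C^1_\Omega$ with interior angle $\theta<\pi$, replacing the tip by the chord at distance $\eps$ along the two sides removes boundary length $\simeq 2\eps$ and adds only $2\eps\sin(\theta/2)<2\eps$, while removing area only of order $\eps^2$; the first-order decrease of the perimeter against the second-order decrease of the volume makes the ratio strictly drop for $\eps$ small, a contradiction.

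The remaining items are the ones for which I would quote external results rather than reprove them. A Cheeger set minimizes the perimeter among the subsets of $\Omega$ with its own volume, hence is a $\Lambda$-minimizer of the perimeter in $\Omega$; the classical regularity theory for (almost-)minimal boundaries then yields item~{\bf 3}, namely analyticity of $\partial\C^1_\Omega\cap\Omega$ off a closed singular set of Hausdorff dimension at most $n-8$. For item~{\bf 4} one writes the first variation of $P(E)/|E|$: along a normal perturbation with speed $\varphi$ supported in the free boundary it equals $|E|^{-1}\int(H-P(E)/|E|)\varphi$, so stationarity forces the mean curvature $H$ of the free boundary to be the constant $P(\C^1_\Omega)/|\C^1_\Omega|=h_1(\Omega)$, which in the plane means that the free boundary consists of arcs of circles of radius $1/h_1(\Omega)$. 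Item~{\bf 5} (the free boundary touches $\partial\Omega$ only at regular points of $\partial\Omega$, and tangentially) I would take from the boundary-regularity theory for this kind of obstacle-type perimeter problem. Finally, item~{\bf 8} -- uniqueness and convexity of the Cheeger set of a planar convex body, and its description as the union of the balls of radius $1/h_1(\Omega)$ contained in $\Omega$ -- is the deepest statement, and the real obstacle in a self-contained account; here I would invoke the characterization of Kawohl and Lachand-Robert, whose proof combines the curvature information of item~{\bf 4}, the tangency of item~{\bf 5}, and the ``inner Cheeger formula'' that fixes $r=1/h_1(\Omega)$ as the unique value with $|\{x\in\Omega:\operatorname{dist}(x,\partial\Omega)>r\}|=\pi r^2$.
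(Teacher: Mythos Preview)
The paper does not actually prove this theorem: immediately after stating it, the authors write ``All the previous properties are well known, a discussion can be found for instance in~[LeoPra]'' and move on to the $\alpha$-generalization (Theorem~\ref{properties}). So there is no ``paper's own proof'' to compare against for this statement.

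That said, your arguments are correct, and they line up almost item-by-item with the proofs the authors \emph{do} give for the $\alpha$-version in Theorem~\ref{properties}: the scaling computation for~1 and the homothety/translation trick for~2, corner-cutting for~6, the trivial inclusion argument for~9, and the appeal to constrained-perimeter regularity for~3,~4,~5 are all the same devices the paper uses. Two small differences worth noting: for item~4 you use the abstract first variation of $P/|E|$, whereas the paper (in the $\alpha$-case) does the explicit ``move the centre of an arc by $\varepsilon$'' computation---your route is cleaner and works in every dimension at once. For item~7 your mediant-inequality argument is the right one for $\alpha=1$; the paper's proof of the analogous point in Theorem~\ref{properties} exploits the strict subadditivity of $t\mapsto t^{1/\alpha}$ for $\alpha>1$ to get the stronger conclusion that $\alpha$-Cheeger sets are connected, which of course fails when $\alpha=1$. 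Your handling of item~8 (citing Kawohl--Lachand-Robert for the full characterization) is exactly what the paper does too, since the convex-hull trick only yields convexity, not uniqueness or the ball-union description.
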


All the previous properties are well known, a discussion can be found for instance in~\cite{LeoPra}. The only result of this section is the following, where we show that all the above properties generalize, with minor changes, to the case of the $\alpha$-Cheeger problem; in particular, a stronger version of $7.$ holds, while only a weaker version of $8.$ is true. We also add a couple of ``new'' properties, where we compare the Cheeger constants relative to different powers $\alpha_1$ and $\alpha_2$, which of course make no sense in the classical case. For the sake of shortness, here and in the rest of the paper we will always write $\C^\alpha_\Omega$ to denote an $\alpha$-Cheeger set in $\Omega$.

\begin{thm}\label{properties}
Let $\Omega, \Omega_1, \Omega_2 \subseteq \mathbb{R}^n$ be open connected sets, let $\alpha, \alpha_1, \alpha_2$ be in $(1,1^*)$ and let $B_1$ be the volume-unitary $n$-dimensional ball. Then the following are true:
\begin{itemize}
\item[{\bf 1.}] The $\alpha$-Cheeger problem is scale invariant while the $\alpha$-Cheeger constant is not; more precisely, for any $t>0$, the $\alpha$-Cheeger sets in $t\Omega$ are precisely the sets $tE$ for all the $\alpha$-Cheeger sets $E$ in $\Omega$; consequently, $h_\alpha(t\Omega)=t^{n-1-\frac{n}{\alpha}} h_\alpha(\Omega)$;
\item[{\bf 2.}] The constrained boundary of an $\alpha$-Cheeger set, i.e. $\partial \C^\alpha_\Omega \cap \partial \Omega$, contains at least two points;
\item[{\bf 3.}] The free boundary of an $\alpha$-Cheeger set, i.e. $\partial \C^\alpha_\Omega \cap \Omega$, is analytic possibly except for a closed singular set whose Hausdorff dimension does not exceed $n-8$;
\item[{\bf 4.}] The mean curvature of the free boundary is constant at every regular point and, for $n=2$, it equals
\begin{equation}\label{alphacurvature}
\frac{h_\alpha(\Omega)}{\alpha}\,|\C^\alpha_\Omega|^{\frac{1}{\alpha}-1};
\end{equation}
\item[{\bf 5.}] The free boundary of an $\alpha$-Cheeger set meets $\partial\Omega$ tangentially at the regular points of $\partial \Omega$: more precisely, if the boundary of an $\alpha$-Cheeger set cointains a point $x\in\partial\Omega$ at which the normal vector to $\partial\Omega$ is defined, then also the normal vector to the $\alpha$-Cheeger set is defined at $x$, and the two vectors coincide;
\item[{\bf 6.}] An $\alpha$-Cheeger set of $\Omega \subseteq \mathbb{R}^2$ can not have corners with an angle smaller than $\pi$;
\item[{\bf 7.}] Any $\alpha$-Cheeger set is connected;
\item[{\bf 8.}] If $\Omega \subseteq \mathbb{R}^2$ is convex then any $\alpha$-Cheeger set is convex;
\item[{\bf 9.}] If $\Omega_1 \subseteq \Omega_2$ then $h_\alpha(\Omega_1) \geq h_\alpha(\Omega_2)$ but the strict inclusion does not imply the strict inequality;
\item[{\bf 10.}] If $B_1\subseteq\Omega$ and $\alpha_2 > \alpha_1$ then $h_{\alpha_2} (\Omega) \geq h_{\alpha_1} (\Omega)$;
\item[{\bf 11.}] If $|\Omega| \leq1$ and $\alpha_2 > \alpha_1$ then $h_{\alpha_2} (\Omega) \leq h_{\alpha_1} (\Omega)$.
\end{itemize}
\end{thm}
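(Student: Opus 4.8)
Our plan is as follows; most of the eleven assertions should follow by adapting the corresponding classical arguments, the one genuinely new feature being the sign of the scaling exponent. Property~1 will be a direct computation: for $E_t:=tE$ one has $P(E_t)=t^{n-1}P(E)$ and $|E_t|=t^{n}|E|$, so the $\alpha$-Cheeger ratio of $E_t$ equals $t^{\,n-1-n/\alpha}$ times that of $E$; since $1<\alpha<1^{*}=n/(n-1)$ we have $n-1-n/\alpha<0$, which yields at once both the scale invariance of the minimizers and the announced scaling of the constant. Property~9 is immediate from the inclusion of the admissible classes, while for its last clause one takes any admissible $\Omega_2$ and sets $\Omega_1:=\Omega_2\setminus S$ with $S$ a point or a segment: then $\Omega_1\subsetneq\Omega_2$, but $S$ being $\mathcal{L}^{n}$-negligible forces $h_\alpha(\Omega_1)=h_\alpha(\Omega_2)$. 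For properties~10 and~11 one compares the two ratios of a fixed $E\subseteq\Omega$ through the factor $|E|^{1/\alpha_1-1/\alpha_2}$, positive because $\alpha_2>\alpha_1$: in~11 we test $h_{\alpha_2}(\Omega)$ against an $\alpha_1$-Cheeger set, whose volume is at most $|\Omega|\le 1$, and conclude directly; in~10 we split according to whether the $\alpha_2$-Cheeger set has volume $\ge 1$ (direct comparison) or $<1$, in which latter case the isoperimetric inequality together with the admissible competitor $B_1\subseteq\Omega$ (of ratio $P(B_1)$) gives $h_{\alpha_2}(\Omega)>P(B_1)\ge h_{\alpha_1}(\Omega)$.

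For the regularity statements~3--6 the first step is to observe that every $\alpha$-Cheeger set $\C^\alpha_\Omega$ is a $\Lambda$-minimizer of the perimeter in $\Omega$: comparing it with any $F$ such that $F\triangle\C^\alpha_\Omega\Subset B_r(x)\subseteq\Omega$ and using $(1+s)^{1/\alpha}\le 1+Cs$ for small $|s|$ (with $C=C(\alpha,|\C^\alpha_\Omega|)$), the minimality $P(\C^\alpha_\Omega)\,|\C^\alpha_\Omega|^{-1/\alpha}\le P(F)\,|F|^{-1/\alpha}$ turns into $P(\C^\alpha_\Omega)\le P(F)+\Lambda\,|\C^\alpha_\Omega\triangle F|$. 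Standard regularity theory for $\Lambda$-minimizers then yields~3, and the first variation along vector fields $X$ compactly supported in $\Omega$ yields~4: writing the first variations of perimeter and of volume as $\int_{\partial^{*}\C^\alpha_\Omega}H\,(X\cdot\nu)\,d\H^{n-1}$ and $\int_{\partial^{*}\C^\alpha_\Omega}(X\cdot\nu)\,d\H^{n-1}$, stationarity of the ratio forces $H\equiv\frac1\alpha\,\frac{P(\C^\alpha_\Omega)}{|\C^\alpha_\Omega|}$ on the free boundary, which is exactly~\eqref{alphacurvature} once one rewrites $\frac{P(\C^\alpha_\Omega)}{|\C^\alpha_\Omega|}=h_\alpha(\Omega)\,|\C^\alpha_\Omega|^{1/\alpha-1}$. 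Properties~5 and~6 will then follow as in the classical case from the ``cut-the-corner'' construction: removing from $\C^\alpha_\Omega$ a region of size $\eps$ near a non-tangential contact point with $\partial\Omega$, or near a corner of $\C^\alpha_\Omega$ of opening smaller than $\pi$, decreases the perimeter by an amount of order $\eps$ while changing the volume only by $O(\eps^{2})$, so it strictly decreases the ratio---a contradiction; the power $1/\alpha$ is irrelevant here, as it only affects the higher-order term, and the competitor is automatically admissible, being a subset of $\C^\alpha_\Omega$.

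There remain properties~2, 7 and~8. For~2, if the constrained boundary $\partial\C^\alpha_\Omega\cap\partial\Omega$ were empty then $\overline{\C^\alpha_\Omega}$ would be a compact subset of $\Omega$, and a slight dilation $x_0+t(\C^\alpha_\Omega-x_0)$ with $t>1$ would still lie in $\Omega$ and have ratio $t^{\,n-1-n/\alpha}h_\alpha(\Omega)<h_\alpha(\Omega)$, which is impossible; if it reduced to a single point $p$, then by~5 the contact is tangential and $p$ is a regular point of $\partial\Omega$, so a small translation of $\C^\alpha_\Omega$ in the direction of the inner normal to $\partial\Omega$ at $p$ keeps it inside $\Omega$ and detaches it from $p$. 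Since translations preserve the ratio, this produces another $\alpha$-Cheeger set with empty constrained boundary, and we are back to the previous case. Property~7 is where $\alpha>1$ genuinely helps: if $\C^\alpha_\Omega$ were decomposable we could write it, up to negligible sets, as $A\cup B$ with $|A|,|B|>0$, $|A\cap B|=0$ and $P(\C^\alpha_\Omega)=P(A)+P(B)$; since $0<1/\alpha<1$, strict subadditivity gives $|\C^\alpha_\Omega|^{1/\alpha}=(|A|+|B|)^{1/\alpha}<|A|^{1/\alpha}+|B|^{1/\alpha}$, whence
\[
h_\alpha(\Omega)=\frac{P(A)+P(B)}{|\C^\alpha_\Omega|^{1/\alpha}}>\frac{P(A)+P(B)}{|A|^{1/\alpha}+|B|^{1/\alpha}}\ge\min\Bigl\{\tfrac{P(A)}{|A|^{1/\alpha}},\,\tfrac{P(B)}{|B|^{1/\alpha}}\Bigr\}\ge h_\alpha(\Omega),
\]
a contradiction; note that for $\alpha=1$ the first inequality becomes an equality, which is precisely why in the classical setting one only concludes that the connected components are Cheeger sets. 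Finally, for~8, if $\Omega$ is convex then the convex hull $\widehat{\C}$ of $\C^\alpha_\Omega$ is contained in $\Omega$, and in the plane $P(\widehat{\C})\le P(\C^\alpha_\Omega)$ whereas $|\widehat{\C}|\ge|\C^\alpha_\Omega|$; hence $\widehat{\C}$ is again an $\alpha$-Cheeger set, all these inequalities must be equalities, and therefore $|\widehat{\C}\setminus\C^\alpha_\Omega|=0$, i.e. $\C^\alpha_\Omega$ is convex.

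The steps I expect to require most care are the single-point case of~2---which relies on property~5 and on a careful choice of representatives so that the translated set really stays in $\Omega$, together with the fact (which we may assume, or recover from the $\Lambda$-minimality and the finiteness of the volume) that $\C^\alpha_\Omega$ is bounded---and the $\Lambda$-minimality itself underlying~3--6, for which one uses that $|\C^\alpha_\Omega|$ is a fixed positive quantity; its positivity is guaranteed since $\Omega$, being open, contains a ball, an admissible competitor of finite ratio. The decomposition of sets of finite perimeter needed in~7 and the planar estimate $P(\widehat{\C})\le P(\C^\alpha_\Omega)$ needed in~8 are classical.
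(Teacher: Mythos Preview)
Your proposal is correct and follows essentially the same lines as the paper, with two small points worth flagging.

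For the curvature formula in property~4 you take the standard route via the first variation along compactly supported vector fields, which in fact gives the constant-mean-curvature statement in every dimension; the paper instead performs an explicit two-dimensional computation, perturbing a subarc of the free boundary by displacing its center by $\eps$ and expanding perimeter and area to first order in $\eps$. Your argument is cleaner and more general; the paper's is more elementary but tied to $n=2$.

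For property~10 your case split invokes ``the $\alpha_2$-Cheeger set'', but the hypothesis $B_1\subseteq\Omega$ does not force $|\Omega|<\infty$, so existence is not guaranteed by the existence theorem. The paper sidesteps this by first showing that the infimum defining $h_\alpha(\Omega)$ can be restricted to competitors with $|E|\ge 1$ (your isoperimetric comparison with $B_1$ is precisely this observation), after which $|E|^{-1/\alpha_1}\le|E|^{-1/\alpha_2}$ applies to \emph{every} admissible $E$ and one simply passes to the infimum. Your argument is immediately repaired by running the same dichotomy on a generic competitor $E\subseteq\Omega$ rather than on an $\alpha_2$-minimizer.
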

\begin{proof}
{\bf 1.} Given any set $\Omega\subseteq \R^n$, let us consider its rescaling $t\Omega$ for some $t>0$: there is a natural bijection between the subsets of $\Omega$ and the subsets of $t\Omega$ given by $E \mapsto tE$. Thus we have
\begin{equation}\label{asbefore}
h_\alpha(t\Omega) = \inf_{F\subseteq t\Omega} \frac{P(F)}{|F|^{1/\alpha}} =  \inf_{E\subseteq\Omega}  \frac{P(tE)}{|tE|^{1/\alpha}}  = t^{n-1-\frac{n}{\alpha}}\inf_{E\subseteq \Omega}  \frac{P(E)}{|E|^{1/\alpha}} = t^{n-1-\frac{n}{\alpha}} h_\alpha(\Omega),
\end{equation}
which directly tells us that $E$ is an $\alpha$-Cheeger set for $\Omega$ if and only if $tE$ is an $\alpha$-Cheeger set for $t\Omega$. Note that we get an estimate not depending on $t$ if $\alpha = 1^*$, the reason for that will appear clear in Section~\ref{existence}.\\
{\bf 2.} First of all, assume the existence of an $\alpha$-Cheeger set $\C^\alpha_\Omega$ such that $\C^\alpha_\Omega \subset \subset \Omega$. Then, the rescaled set $t\C^\alpha_\Omega$ would still be contained in $\Omega$ for some $t>1$, and this is against the minimality of $\C^\alpha_\Omega$ since for any $\alpha<n/(n-1)$ it is
\[
\frac{P(t\C^\alpha_\Omega)}{|t\C^\alpha_\Omega|^{1/\alpha}} = \frac{t^{n-1}P(\C^\alpha_\Omega)}{{t^{n/\alpha}}|\C^\alpha_\Omega|^{1/\alpha}} = t^{n-1-\frac n\alpha}\,\frac{P(\C^\alpha_\Omega)}{|\C^\alpha_\Omega|^{1/\alpha}} < \frac{P(\C^\alpha_\Omega)}{|\C^\alpha_\Omega|^{1/\alpha}} = h_\alpha (\Omega)\,.
\]
Suppose now that the boundary of some $\alpha$-Cheeger set $\C^\alpha_\Omega$ touches $\partial \Omega$ in a single point, and assume for simplicity this point to be the origin of $\R^n$. Fix then two small constants $\eps\ll \delta \ll 1$, and consider the modified set
\[
\widetilde\C = \C_1 \cup \big[1,1+\eps\big] \Gamma\cup (1+\eps)\C_2\,,
\]
where
\begin{align*}
\C_1=\C^\alpha_\Omega \cap B_\delta,\, && \C_2=\C^\alpha_\Omega\setminus B_\delta\,, && \Gamma = \C^\alpha_\Omega \cap \partial B_\delta\,,
\end{align*}
and $B_\delta=\{x\in \R^n:\, |x|<\delta\}$ is the ball with radius $\delta$ centered at the origin. Notice that $\widetilde \C\subseteq\Omega$ as soon as $\eps$ and $\delta$ are small enough. Thanks to the well-known Vol'pert Theorem (see for instance~\cite[Th~3.108]{AmFuPa}), for almost every $\delta>0$ one has that $\partial^* \C^\alpha_\Omega \cap \partial B_\delta=\partial^*_{n-1} \Gamma$ up to a $\H^{n-2}$-negligible set, where by $\partial^*_{n-1}\Gamma$ we denote the reduced boundary of $\Gamma$ as a subset of the $(n-1)$-dimensional sphere $\partial B_\delta$. Up to choose any such $\delta$, one can then calculate
\begin{align*}
|\widetilde \C| &= |\C_1| + (1+\eps)^n |\C_2| + \frac \delta n\big((1+\eps)^n-1\big)\H^{n-1}(\Gamma)\,, \\
P(\widetilde\C) &= P(\C_1) + (1+\eps)^{n-1} P(\C_2) + \frac \delta{n-1} \big((1+\eps)^{n-1}-1\big) \H^{n-2}(\partial^*_{n-1}\Gamma)\,.
\end{align*}
As a consequence, a trivial calculation ensures that the $\alpha$-Cheeger ratio of $\widetilde \C$ is strictly better than that of $\C^\alpha_\Omega$ (and this is against the optimality of the latter) as soon as $\eps$ is chosen small enough, if the inequality
\[
(n-1) P(\C_2) + \delta \H^{n-2}(\partial^*_{n-1} \Gamma) < \frac{P(\C^\alpha_\Omega) n|\C_2|}{\alpha|\C^\alpha_\Omega|}
\]
holds. And in turn, since $|\C_2|$ (resp., $P(\C_2)$) is arbitrarily close to $|\C^\alpha_\Omega|$ (resp., to $P(\C^\alpha_\Omega)$) up to have chosen $\delta$ arbitrarily small, the contradiction comes exactly as in~(\ref{asbefore}) if we can select an arbitrarily small $\delta$ such that $\H^{n-2}(\partial^*_{n-1}\Gamma) \delta$ is small as we wish. And finally, this is surely true, because otherwise one would have
\[
\liminf_{\delta\to 0} \H^{n-2}(\partial^*_{n-1}\Gamma) \delta > 0\,,
\]
which would readily imply that $P(\C^\alpha_\Omega)=+\infty$, and this is obviously impossible, being $\C^\alpha_\Omega$ an $\alpha$-Cheeger set.
\\
{\bf 3.,4.,5.} Any $\alpha$-Cheeger set is in particular a minimizer of the perimeter among all the subsets of $\Omega$ having its same volume. Therefore, the regularity properties of the free boundary follow by well-known results, see for instance~\cite{GoMaTa2,LeoPra}, and it only remains to prove the formula for the curvature in the two-dimensional case.\par

To do so we observe that, in the two-dimensional case, a curve with constant curvature is nothing else than an arc of circle, hence the free boundary of an $\alpha$-Cheeger set $\C^\alpha_\Omega$ is a union of arcs of circle. Let us then focus on a single arc; more precisely, let us fix an arc of circle $\gamma$, with radius $r$ and amplitude $2\theta$, which belongs to $\partial\C^\alpha_\Omega$ and which is entirely contained in the interior of $\Omega$; let us also call $P$ and $Q$ the endpoints of $\gamma$. Let us now slightly modify the set $\partial\C^\alpha_\Omega$ as follows: we replace $\gamma$ with a similar arc of circle, still connecting $P$ and $Q$, in such a way that the center of the new arc has been moved of a distance $\eps$ toward the segment $PQ$. Of course, if $|\eps|\ll 1$ then the new set is the boundary of a set $A_\eps$, very similar to $\C^\alpha_\Omega$ and contained inside $\Omega$. A simple trigonometric calculation ensures that the radius and the amplitude of the new arc are given by
\begin{align*}
\tilde r = r -\eps \cos\theta + o(\eps)\,, && \tilde\theta = \theta + \eps\,\frac{\sin\theta}r + o(\eps)\,,
\end{align*}
and as a consequence area and perimeter of $A_\eps$ are
\begin{align*}
|A_\eps| = |\C^\alpha_\Omega| + 2 r \eps(\sin \theta - \theta \cos \theta) + o(\eps)\,, &&
P(A_\eps) = P(\C^\alpha_\Omega) + 2\eps (\sin \theta - \theta \cos \theta)+o(\eps)\,,
\end{align*}
which gives
\[
\frac{P(A_\eps)}{|A_\eps|^{1/\alpha}} = \frac{P(\C^\alpha_\Omega)}{|\C^\alpha_\Omega|^{1/\alpha}} + 
2\eps \,\frac{(\sin\theta-\theta\cos\theta)}{|\C^\alpha_\Omega|^{1/\alpha}}\, \bigg( 1 - \frac{P(\C^\alpha_\Omega)r}{\alpha|\C^\alpha_\Omega|}\bigg)+o(\eps)\,.
\]
Since the $\alpha$-Cheeger ratio of $A_\eps$ cannot be better than that of the $\alpha$-Cheeger set $\C^\alpha_\Omega$, and since any small positive or negative $\eps$ can be chosen, we obtain the equality $P(\C^\alpha_\Omega) r =\alpha |\C^\alpha_\Omega|$, from which the formula~(\ref{alphacurvature}) for the curvature follows.\\
{\bf 6.}
Suppose that an $\alpha$-Cheeger set $\C^\alpha_\Omega$ contains an angle strictly smaller than $\pi$; by ``cutting away'' the corner at a distance $\eps>0$, we can lower the perimeter of a quantity which goes as $\eps$, changing the volume only of a quantity proportional to $\eps^2$. If $\eps\ll 1$, the new set has a strictly better $\alpha$-Cheeger ratio than $\C^\alpha_\Omega$, and since this is impossible we conclude also this point.\\
{\bf 7.}
Let $\C^\alpha_\Omega$ be a non-connected $\alpha$-Cheeger set, and let $\C_1$ and $\C_2$ be two non-empty open sets with disjoint closures such that $\C^\alpha_\Omega=\C_1\cup \C_2$. Since of course $P(\C_1\cup \C_2)= P(\C_1) + P(\C_2)$, while for any $\alpha>1$ it is $|\C_1\cup \C_2|^{1/\alpha} = \big(|\C_1|+ |\C_2|\big)^{1/\alpha}<|\C_1|^{1/\alpha} + |\C_2|^{1/\alpha}$, one has
\[
\min\bigg\{ \frac{P(\C_1)}{|\C_1|^{1/\alpha}},\frac{P(\C_2)}{|\C_2|^{1/\alpha}}\bigg\}
\leq \frac{P(\C_1)+P(\C_2)}{|\C_1|^{1/\alpha}+|\C_2|^{1/\alpha}}
< \frac{P(\C_1\cup \C_2)}{|\C_1\cup \C_2|^{1/\alpha}}
=\frac{P(\C^\alpha_\Omega)}{|\C^\alpha_\Omega|^{1/\alpha}}\,,
\]
Since the above inequality says that there is some set which has $\alpha$-Cheeger ratio strictly better than $\C^\alpha_\Omega$, the contradiction shows this point.\\
{\bf 8.}
In the two-dimensional case, the convex hull of any set $E$ has bigger volume and smaller perimeter than $E$, with strict inequalities if $E$ is not already convex. The convexity of any $\alpha$-Cheeger set corresponding to a convex set $\Omega$ is then obvious. It is to be mentioned that, for the standard Cheeger problem, the convexity of the Cheeger sets of convex domains is known also in the higher dimensional case, see~\cite{StrZie,AltCas,KawLR}.\\
{\bf 9.}
This is obvious, since if $\Omega_1\subseteq\Omega_2$ then any subset of $\Omega_1$ is also a subset of $\Omega_2$. The fact that the strict inclusion does not imply the strict inequality follows by trivial counterexamples.\\
{\bf 10.} Let us take any set $E\subseteq \Omega$ with $|E|\leq 1$, and let $B_E\subseteq B_1\subseteq \Omega$ be a ball with the same volume as $E$. Then,
\[
\frac{P(E)}{|E|^{1/\alpha}} \geq \frac{P(B_E)}{|B_E|^{1/\alpha}} \geq \frac{P(B_1)}{|B_1|^{1/\alpha}}\,,
\]
where the last inequality holds true whenever $\alpha<1^*$. As a consequence, we derive that
\begin{equation}\label{stimora}
h_\alpha (\Omega)= \inf_{E \subseteq \Omega,\, |E|\geq 1} \frac{P(E)}{|E|^{1/\alpha}}\,.
\end{equation}
Observe now that, for any $E\subseteq \Omega$ with $|E|\geq 1$, it is
\[
\frac{P(E)}{|E|^{1/\alpha_1}} \leq \frac{P(E)}{|E|^{1/\alpha_2}}\,,
\]
which by taking the infimum over the sets $E$ and recalling~(\ref{stimora}) concludes the claim.\\
{\bf 11.} The proof of this last claim is almost identical to the previous one: since $|\Omega|\leq 1$, then of course $|E|\leq 1$ for every set $E\subseteq \Omega$, thus
\[
\frac{P(E)}{|E|^{1/\alpha_1}} \geq \frac{P(E)}{|E|^{1/\alpha_2}}
\]
and the claim again follows by taking the infimum over sets $E$.
\end{proof}

\begin{rem}
If an $\alpha_2$-Cheeger set with volume strictly greater than $1$ exists, then point~10 of the previous theorem holds with a strict inequality. If an $\alpha_1$-Cheeger set with volume strictly smaller than $1$ exists, then point~11 of the previous theorem holds with a strict inequality.
\end{rem}

\begin{rem}
As a straight consequence of the previous proof we have obtained that, to compute the $\alpha$-Cheeger constant of any set $\Omega$, one can reduce himself to minimize the $\alpha$-Cheeger ratio among the sets with volume bigger than the biggest ball contained in $\Omega$.
\end{rem}

\section{Existence\label{existence}}

This short section is devoted to show the existence of $\alpha$-Cheeger sets in any given set $\Omega$: the proof is identical to the one for the classical case, and we report it only for the sake of completeness.\par

Before starting, let us briefly compute the $\alpha$-Cheeger ratio of a ball $B_r$ of radius $r>0$, which is
\[
\frac{P(B_r)}{|B_r|^{1/\alpha}} = \frac{n\omega_n r^{n-1}}{(\omega_n r^n)^{\frac 1\alpha}}
= n \omega_n^{1-\frac 1\alpha} r^{n-1-\frac n\alpha}\,.
\]
Notice that the exponent $n-1-\frac n\alpha$ is negative if and only if $\alpha<1^*$, and it is null when $\alpha=1^*$. As a consequence, for $\alpha>1^*$, the $\alpha$-Cheeger ratio of smaller and smaller balls converges to $0$, while for $\alpha=1^*$ all the balls have the same $\alpha$-Cheeger ratio, regardless of their radius. As a consequence, we can observe what follows.
\begin{rem}
If $\alpha>1^*$, then the $\alpha$-Cheeger constant of any set $\Omega$ is $0$, and there are no $\alpha$-Cheeger sets. If $\alpha=1^*$, every set $\Omega$ have the same $\alpha$-Cheeger constant, and the $\alpha$-Cheeger sets are all and only the balls.
\end{rem}
In other words, the $\alpha$-Cheeger problem would be trivial for $\alpha\geq 1^*$, and this is why one always chooses $1<\alpha<1^*$ for the generalized Cheeger problem. Observe that, in principle, the problem would be non-trivial even for $0<\alpha<1$; nevertheless, people usually consider the case $\alpha>1$ because this corresponds to give more importance to the isoperimetric properties of the set; in other words, when $\alpha$ increases then for the subsets $E$ in the definition~(\ref{CheegerPbl}) it becomes more and more important to have a smaller perimeter, instead of having a bigger area.\par

We can now pass to the existence result.

\begin{thm}\label{thmexistence}
Let $\Omega\subseteq\R^n$ be any set with finite volume. Then, for every $1<\alpha<1^*$ there exists an $\alpha$-Cheeger set in $\Omega$; in other words, the infimum in~(\ref{CheegerPbl}) is a minimum.
\end{thm}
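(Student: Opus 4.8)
The plan is to run the direct method of the calculus of variations in the space $BV_{\mathrm{loc}}$. First I would invoke the observation made in the proof of point~10 of Theorem~\ref{properties}, namely formula~(\ref{stimora}): since $1<\alpha<1^*$, balls of small volume have $\alpha$-Cheeger ratio tending to $+\infty$, so the infimum in~(\ref{CheegerPbl}) is unchanged if we restrict to competitors $E\subseteq\Omega$ with $|E|\geq 1$ (or, more precisely, $|E|\geq |B|$ for the largest ball $B\subseteq\Omega$, as remarked after the theorem). Take a minimizing sequence $\{E_k\}$ of such sets; then $|E_k|$ is bounded below by a positive constant and above by $|\Omega|<\infty$, and $P(E_k)=\frac{P(E_k)}{|E_k|^{1/\alpha}}|E_k|^{1/\alpha}$ is bounded above since the ratio converges to $h_\alpha(\Omega)$ and the volumes are bounded. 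Hence $\{\chi_{E_k}\}$ is bounded in $BV(\R^n)$ (extending by zero outside $\Omega$), and by compactness of $BV$ in $L^1_{\mathrm{loc}}$ together with the finiteness of $|\Omega|$ we extract a subsequence with $\chi_{E_k}\to\chi_E$ in $L^1(\R^n)$ for some set $E$ of finite perimeter with $E\subseteq\Omega$ (up to negligible sets).

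Next I would pass to the limit in the two ingredients of the ratio. Lower semicontinuity of the perimeter under $L^1_{\mathrm{loc}}$ convergence gives $P(E)\leq\liminf_k P(E_k)$, while the $L^1$ convergence gives $|E_k|\to|E|$, and in particular $|E|\geq 1>0$, so $E$ is not empty and the ratio $P(E)/|E|^{1/\alpha}$ is well defined. Combining, $\frac{P(E)}{|E|^{1/\alpha}}\leq\liminf_k\frac{P(E_k)}{|E_k|^{1/\alpha}}=h_\alpha(\Omega)$, and since $E$ is an admissible competitor the reverse inequality is automatic; thus $E$ realizes the infimum, which is therefore a minimum.

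The only genuinely delicate point is the one that is special to the generalized problem and does not arise in the classical case: one must be sure the limit set $E$ has strictly positive volume, so that dividing by $|E|^{1/\alpha}$ makes sense and no mass of the minimizing sequence escapes to ``vanish''. This is exactly what the restriction to sets with $|E|\geq 1$ buys us — because the volumes converge strongly in $L^1$, the lower bound survives in the limit. (This is also precisely why the argument breaks for $\alpha\geq 1^*$: there the ratio is not improved by enlarging small sets, one cannot restrict to $|E|\geq 1$, and indeed, as observed just before the theorem, for $\alpha>1^*$ the infimum is $0$ and is not attained.) Two routine caveats I would mention in passing: the extension of $\chi_{E_k}$ by zero outside $\Omega$ need not be a competitor issue since we only use it to get $BV$ bounds on $\R^n$; and for the perimeter lower semicontinuity one uses $P(\cdot)$ computed in $\R^n$, which dominates the relative perimeter in $\Omega$, but since all our sets are contained in $\Omega$ and we do not count $\partial\Omega$ as boundary one checks that no spurious perimeter is created on $\partial\Omega$ in the limit — for a general open $\Omega$ this is handled by working with the relative perimeter $P(\cdot;\Omega)$ throughout, which is still $L^1_{\mathrm{loc}}(\Omega)$-lower semicontinuous.
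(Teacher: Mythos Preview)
Your proof is correct and follows essentially the same route as the paper: direct method via a minimizing sequence, $BV$ boundedness, compactness giving $L^1$ convergence of the characteristic functions, lower semicontinuity of the perimeter, and the key observation that $\alpha<1^*$ forces the limit set to have positive volume. The only cosmetic difference is that you impose a uniform lower bound on the volumes \emph{a priori} (via the remark after Theorem~\ref{properties}), whereas the paper takes an arbitrary minimizing sequence and rules out $|E|=0$ at the end by the same isoperimetric comparison with shrinking balls; your closing caveat about relative versus absolute perimeter is unnecessary here, since the paper works with the full perimeter $P(\cdot)$ in $\R^n$ throughout, for which $L^1$ lower semicontinuity is standard.
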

\begin{proof}
Let $\{E_k\}_k$ be a minimizing sequence for~(\ref{CheegerPbl}). Then, $\chi_{E_k}$ is a bounded sequence in $BV(\R^n)$ since
\[
\|\chi_{E_k}\|_{BV(\R^n)} = \|\chi_{E_k}\|_{L^1(\R^n)} + \|D \chi_{E_k}\|_{\mathcal M(\R^n)}
=  |E_k| + P(E_k)\,,
\]
and $|E_k|$ is bounded by $|\Omega|$, while $P(E_k)$ is uniformly bounded because $\{E_k\}$ is a minimizing sequence for~(\ref{CheegerPbl}) --we can assume that $h_\alpha(\Omega)<+\infty$, because otherwise any subset of $\Omega$ is an $\alpha$-Cheeger set, and there is nothing to prove. Thanks to the classical compactness results for $BV(\R^n)$ (see for instance~\cite{AmFuPa}), and recalling that $\Omega$ has finite volume, we obtain that, up to a subsequence, $\chi_{E_k}$ weakly converges in $BV$ to some function $\varphi$. In particular, the convergence is strong in $L^1$, and this implies that $\varphi$ is also the characteristic function of some set $E\subseteq \Omega$; moreover, the lower semi-continuity implies that
\[
P(E) = \|D \chi_{E}\|_{\mathcal M(\R^n)} \leq \liminf_{k\to\infty} \|D \chi_{E_k}\|_{\mathcal M(\R^n)}
=\liminf_{k\to\infty} P(E_k)\,.
\]
Summarizing, we have proved the existence of a set $E\subseteq \Omega$ such that
\begin{align}\label{lookhere}
|E| = \lim_{k\to\infty} |E_k|\,, && P(E) \leq \liminf_{k\to \infty} P(E_k)\,.
\end{align}
The proof will be concluded once we show that $E$ is an $\alpha$-Cheeger set in $\Omega$, which in turn is obvious from~(\ref{lookhere}) as soon as we observe that $|E|>0$. In fact, let us assume that $|E|=0$: this implies that $|E_k|\to 0$, hence we would have, calling $r_k$ the radius of a ball $B_k$ with $|B_k|=|E_k|$,
\[
\frac{P(E_k)}{|E_k|^{1/\alpha}} \geq \frac{P(B_k)}{|B_k|^{1/\alpha}} = n \omega_n^{1-\frac 1\alpha} r_k^{n-1-\frac n\alpha} \to \infty\,,
\]
where the last convergence holds since $1<\alpha<1^*$. Since this last estimate is in contradiction with the fact that $\{E_k\}$ is a minimizing sequence for~(\ref{CheegerPbl}), we have shown that it must be $|E|>0$ and, as noticed above, this concludes the proof.
\end{proof}

\section{$\alpha$-Cheeger sets for rectangles\label{rectangles}}

In this section, we study in detail the $\alpha$-Cheeger problem for the case of the rectangles, and in the next section we will generalize our results to the case of the strips (which are ``deformations'' of a rectangle, see Definition~\ref{defstrip}). This is not just an example: indeed, the study of $2$-dimensional strips or $3$-dimensional waveguides is already important for the standard Cheeger problem (we will discuss this in the next section); moreover, for the generalized Cheeger problem, we will observe that rectangles give simple counterexamples to classical facts which are instead true for the standard Cheeger problem.

The plan of this section is the following: first we give a structure result for the $\alpha$-Cheeger sets in rectangles, then we show how this gives the above-mentioned counterexamples, and finally, for the sake of completeness, we explicitely compute the $\alpha$-Cheeger constants of rectangles.

\subsection{A structure theorem for $\alpha$-Cheeger sets of rectangles}

Through all this section, we denote by $R_L=(-L/2,L/2)\times (-1,1)$ the rectangle of length $L\geq 2$ and width $2$, and by $R_\infty=\R\times (-1,1)$ the unbounded rectangle. Of course, by trivial rescaling one can treat also any other rectangle. Let us recall the results which are known for the rectangles in the standard case.

\begin{thm}\label{stand}
The infinite rectangle $R_\infty$ admits no Cheeger sets, but the rectangles $R_L\subseteq R_\infty$ are a mimimizing sequence for~(\ref{chst}) when $L\to \infty$, and $h(R_\infty)=1$. Any rectangle $R_L$ admits a unique Cheeger set, which is the union of all the balls of radius $1/h(R_L)<1$, according to point~8 of Theorem~\ref{classical}: in particular, this set is the whole rectangle with the four corners ``cut away'' by four arcs of circle.
\end{thm}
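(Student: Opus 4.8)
The plan is to handle the infinite rectangle $R_\infty$ and the finite ones $R_L$ in parallel, using a single \emph{calibration} as the common tool. For the upper bound $h(R_\infty)\le1$ I would simply test the infimum in~(\ref{chst}) with the competitors $R_L$ themselves: since $|R_L|=2L$ and $P(R_L)=2L+4$, one has $P(R_L)/|R_L|=1+2/L\to1$, and this same computation will show that $\{R_L\}$ is a minimizing sequence for~(\ref{chst}) with $\Omega=R_\infty$ once we know that $h(R_\infty)=1$. For the matching lower bound I would use the smooth vector field $X(x,y)=(0,y)$, which on $R_\infty$ satisfies $\operatorname{div}X\equiv1$ and $|X|\le1$, with $|X|=1$ only on $\{y=\pm1\}$. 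For any $E\subseteq R_\infty$ of finite volume and finite perimeter, applying the Gauss--Green formula on $E\cap([-T,T]\times\R)$ and letting $T\to\infty$ (the vertical truncating segments carry no flux, as $X$ has vanishing horizontal component) gives
\[
|E|=\int_E\operatorname{div}X=\int_{\partial^*E}X\cdot\nu_E\,d\H^1\le\int_{\partial^*E}|X|\,d\H^1\le P(E)\,,
\]
hence $P(E)/|E|\ge1$. Thus $h(R_\infty)=1$, and the $R_L$ form a minimizing sequence as $L\to\infty$.

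Next I would deduce the non-existence of a Cheeger set for $R_\infty$ from the equality case of the above inequality: such a set $E$ would force $X\cdot\nu_E=|X|=1$ for $\H^1$-almost every point of $\partial^*E$, hence $|y|=1$ and $\nu_E=(0,\pm1)$ there, so $\partial^*E$ would be $\H^1$-contained in the two lines $\{y=\pm1\}$. But then $\chi_E$ has zero perimeter \emph{inside} the connected open set $R_\infty$, so it is a.e.\ constant there, and both possibilities $|E|=0$ and $|E|=|R_\infty|=+\infty$ are incompatible with $E$ being a Cheeger set. For a finite rectangle $R_L$ the existence of a unique Cheeger set, its convexity, and its description as the union of all balls of radius $r_L:=1/h(R_L)$ contained in $R_L$ are exactly point~8 of Theorem~\ref{classical} applied to the convex set $R_L$; the only thing to add is the strict inequality $r_L<1$. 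From $R_L\subseteq R_\infty$ and point~9 of Theorem~\ref{classical} we get $h(R_L)\ge h(R_\infty)=1$, i.e.\ $r_L\le1$; and $r_L=1$ is impossible, because by point~8 the Cheeger set of $R_L$ would then be the union of all \emph{unit} balls contained in $R_L$, i.e.\ the stadium-shaped region $\big([-L/2+1,L/2-1]\times[-1,1]\big)$ with two unit half-disks glued at the two short ends, whose perimeter-to-volume ratio is $\tfrac{2(L-2)+2\pi}{2(L-2)+\pi}>1$, against $h(R_L)=1/r_L=1$.

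Finally I would recognise the shape: the union of all closed balls of radius $r_L<1$ contained in $R_L$ is the Minkowski sum of $\overline{B_{r_L}}$ with the set of admissible centres, which is the rectangle $[-L/2+r_L,\,L/2-r_L]\times[-1+r_L,\,1-r_L]$ (non-degenerate, since $r_L<1\le L/2$); this Minkowski sum is precisely the rectangle $R_L$ with each of its four right angles replaced by a quarter-circle of radius $r_L$, that is, ``the whole rectangle with the four corners cut away by four arcs of circle''. If desired, the value $h(R_L)=1/r_L$ is then pinned down by the self-consistency identity $P(\C^1_{R_L})/|\C^1_{R_L}|=1/r_L$, which, inserting the area and perimeter of the rounded rectangle, reduces to the quadratic $(4-\pi)r_L^2-(2L+4)r_L+2L=0$. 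The heart of the proof is the lower bound $h(R_\infty)\ge1$ and its equality case: one has to check that the Gauss--Green formula is legitimate for a possibly unbounded, finite-volume competitor, and that equality really forces $E$ to degenerate to $\emptyset$ or to the whole strip --- this is exactly where the special form of $X$ (no horizontal component, and modulus $1$ only on the two boundary lines) is used; the statements about the finite rectangles are then essentially bookkeeping on top of Theorem~\ref{classical}.
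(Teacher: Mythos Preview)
The paper does not prove Theorem~\ref{stand} at all: it is stated there as a recollection of known facts (``Let us recall the results which are known for the rectangles in the standard case''), so there is no ``paper's own proof'' to compare against. Your argument is therefore genuinely new content relative to the paper, and it is correct.

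A few brief remarks on what you do and why it works. The calibration with $X(x,y)=(0,y)$ is the clean way to get $h(R_\infty)\ge 1$ and to rule out equality; your handling of the unbounded competitor via truncation by $\{|x|\le T\}$ is the right idea, and the observation that $X$ has no horizontal component is exactly what kills the flux through the truncating segments (strictly speaking one should take $T$ outside a null set so that the slice $E\cap\{x=\pm T\}$ behaves well, but this is routine). The equality analysis --- forcing $|y|=1$ and hence $P(E;R_\infty)=0$ on the connected open strip --- is the decisive step and is carried out correctly. For the finite rectangles you rightly lean on point~8 of Theorem~\ref{classical}, and your exclusion of $r_L=1$ via the explicit stadium ratio is a nice touch that the paper does not bother to record. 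The final Minkowski-sum description and the quadratic for $r_L$ are exactly what the paper later rederives (compare~(\ref{formulaperimearea}) and~(\ref{eq: raggio per rettangolo corto}) with $\alpha=1$), so your computation is consistent with the rest of the paper.
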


We will notice that the situation is quite different when $\alpha>1$: we will observe in Lemma~\ref{primo} that the diameter of any $\alpha$-Cheeget set is bounded independently of $L$, and we will completely characterize, in Theorem~\ref{thm: structure theorem for rectangles}, the structure of the $\alpha$-Cheeger sets, in particular showing that there exist $\alpha$-Cheeger sets also for the unbounded rectangle $R_\infty$. In the rest of the section, for simplicity of notation, we will write $\C^\alpha_L$ to denote an $\alpha$-Cheeger set in $R_L$, instead of $\C^\alpha_{R_L}$. First of all, we observe that any $\alpha$-Cheeger set of a rectangle can only have two possible shapes, depicted in Figure~\ref{Fig:rectangles}.

\begin{lem}\label{zeresimo}
Let $\C^\alpha_L$ be an $\alpha$-Cheeger set in the rectangle $R_L$. Then, the free boundary of $\C^\alpha_L$ is not empty and it has constant curvature $1/r$. Moreover, either $r<1$ and the free boundary is given by four arcs of circle of amplitude $\pi/2$ ``cutting away'' the four corners of $R_L$, as in Figure~\ref{Fig:rectangles}~(left), or $r=1$ and the free boundary is given by two arcs of circle of amplitude $\pi$ connecting the top and the bottom sides of $R_L$, as in Figure~\ref{Fig:rectangles}~(right).
\end{lem}
\begin{proof}
Thanks to Theorem~\ref{properties}, point~6, we know that (the closure of) $\C^\alpha_L$ cannot contain any of the four corners of $R_L$. As a consequence, the free boundary of $\C^\alpha_L$ is not empty, and by point~4 of Theorem~\ref{properties} we know that it has constant curvature, call it $1/r$. Thus, the free boundary of $\C^\alpha_L$ is given by arcs of circle of radius $r$, connecting regular points of the boundary of the rectangle. Since by point~5 of Theorem~\ref{properties} these arcs must meet $\partial R_L$ tangentially, the two endpoints of each arc must lie on different sides of the rectangle.\par
\begin{figure}[htbp]
\input{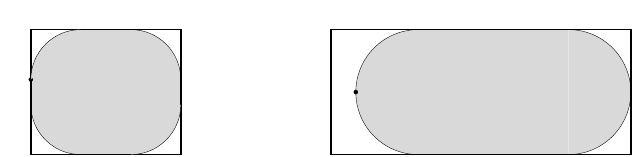_t}
\caption{The two possible shapes of the $\alpha$-Cheeger sets in rectangles.}\label{Fig:rectangles}
\end{figure}
Consider now the left side of $R_L$; suppose first that its intersection with $\C^\alpha_L$ is not empty, and call $P$ the highest point of this intersection; keep in mind that $P$ cannot be a corner of the rectangle. Thus, there is a connected component of the free boundary which is an arc of circle of radius $r$ starting from $P$; as a consequence, the distance of $P$ from the upper-left corner of the rectangle is at least $r$. Since, analogously, the lowest point in the intersection of $\C^\alpha_L$ with the left side of $R_L$ has at least distance $r$ from the lower-left corner, we immediately derive that $r\leq 1$. From that, and keeping in mind that $L\geq 2$, it follows that the arc starting at $P$ cannot reach the right side of the rectangle, and it must instead reach the upper side (in principle, it could also reach directly the lower side, but this would readily imply that $\C^\alpha_L$ is a disk with radius $r<1$, and this case can be immediately ruled out by observing that $R_L$ contains also disks of radius $1$, whose $\alpha$-Cheeger ratio is strictly better). Therefore, we deduce that the distance of $P$ from the upper-left corner is \emph{exactly} $r$. Summarizing, we have proved that if $\C^\alpha_L$ has a non-empty intersection with the left side of the rectangle, then the free boundary of $\C^\alpha_L$ contains the two arcs of circle of radius $r$ and amplitude $\pi/2$ connecting the four points having distance $r$ from the upper-left and the lower-left corner.\par

Consider now the case when $\C^\alpha_L$ does not intersect the left side of the rectangle; then, let us call $P$ a point of $\partial \C^\alpha_L$ having minimal distance to the left side. Since $\partial \C^\alpha_L$ has positive curvature, $P$ must be in the interior of the rectangle, hence it belongs to some arc of circle with radius $r$; by assumption, this arc of circle does not touch the left side of $R_L$, so it must connect the upper and the lower side of the rectangle (keep in mind the first case considered above, to exclude that this arc reaches the right side). Since the arcs meet $\partial R_L$ tangentially, we deduce that $r=1$ necessarily, so the arc containing $P$ has amplitude $\pi$. Summarizing, we have proved that if $\C^\alpha_L$ does not intersect the left side of the rectangle, then $r=1$ and the closest point of $\partial\C^\alpha_L$ to the left side is contained in an arc of circle, belonging to the free boundary of $\C^\alpha_L$, of radius $r$ and amplitude $\pi$, which connects the two horizontal sides of $R_L$.\par

Putting together the two cases discussed above, and their obvious counterparts for the right sides, we immediately realize that there are only two possibilities: either $r<1$, and then the boundary of $\C^\alpha_L$ contains the four arcs of radius $r$ and amplitude $\pi/2$ connecting the eight points at distance $r$ from the corners, or $r=1$ and the boundary of $\C^\alpha_L$ contains two arcs of radius $r=1$ and amplitude $\pi$ connecting the above and the bottom side of the rectangle. Notice that, in this case, it is possible that these two arcs meet also the lateral sides of the rectangle, but this is not necessary: for instance, in Figure~\ref{Fig:rectangles}~(right), the left arc does not touch the left side, while the right arc touches the right side.\par

To conclude the proof, we have then to prove that the free boundary of $\C^\alpha_L$ cannot contain any other arc, except those described above. Indeed, if $r<1$ there cannot be any other arc, simply because there does not exist any other arc of radius $r$ which connects two points in the boundary of the rectangle reaching them in a tangential way; instead, if $r=1$, the presence of other arcs in the free boundary would imply that $\C^\alpha_L$ is not connected, against point~7 of Theorem~\ref{properties}.
\end{proof}

\begin{cor}\label{quinto}
Let $\C^\alpha_L$ be an $\alpha$-Cheeger set in the rectangle $R_L$ with radius of curvature $r=1$. Then $\C^\alpha_L$ is a rectangle of height $1$ and width $M$ topped with two half-disks of radius $1$, where
\begin{equation}\label{eq: value of M}
M=M(\alpha) := \frac{\pi}{2} \cdot \frac{2-\alpha}{\alpha-1}\,.
\end{equation}
\end{cor}
\begin{proof}
By Lemma~\ref{zeresimo} we know the two possible shapes of an $\alpha$-Cheeger set in $R_L$; if $r=1$, then the shape must be the one depicted in Figure~\ref{Fig:rectangles}~(right), so actually $\C^\alpha_L$ is a rectangle topped with two half-disks, and to conclude we only have to evaluate $M$. To do so, by simply rewriting~(\ref{alphacurvature}) we can express the radius of curvature of the free boundary in terms of the $\alpha$-Cheeger constant and the area as
\begin{equation}\label{eq:raggioperalfa}
r =\frac{\alpha}{h_\alpha(R_L)}\,|\C^\alpha_L|^{1-\frac 1\alpha}\,,
\end{equation}
and since $r=1$ we derive
\[
\alpha|\C^\alpha_L|^{1-\frac 1\alpha}=h_\alpha(R_L) = \frac{P(\C^\alpha_L)}{|\C^\alpha_L|^{1/\alpha}}\,.
\]
Finally, substituting the values of area and perimeter of $\C^\alpha_L$ gives~(\ref{eq: value of M}).
\end{proof}

Let us now show that the diameter of the $\alpha$-Cheeger sets in the rectangles is uniformly bounded.

\begin{lem}\label{primo}
For every $\alpha\in (1,2)$, there exists $\overline d=\overline d(\alpha)$ such that the diameter of any $\alpha$-Cheeger set $\C^\alpha_L$ of $R_L$ is less than $\overline d$. More precisely, the $\alpha$-Cheeger ratio of sets with diameter $d$ diverges when $d\to\infty$.
\end{lem}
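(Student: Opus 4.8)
The plan is to bound the diameter of any $\alpha$-Cheeger set $\C^\alpha_L$ by comparing it with a competitor obtained by "truncating" the set, and exploiting the fact --- established in Theorem~\ref{properties} --- that the $\alpha$-Cheeger ratio is \emph{minimized} by sets of bounded size because of the negativity of the scaling exponent $n-1-\frac{n}{\alpha}$ (with $n=2$). First I would observe that, by point~1 of Theorem~\ref{properties}, the $\alpha$-Cheeger ratio of the unit square $Q=(-1/2,1/2)\times(-1,1)$ (or any fixed small rectangle inside every $R_L$, recalling $L\geq 2$) is a finite number $Q_\alpha$, so $h_\alpha(R_L)\leq Q_\alpha$ for all $L$; hence any $\alpha$-Cheeger set $\C=\C^\alpha_L$ satisfies $P(\C)\leq Q_\alpha\,|\C|^{1/\alpha}\leq Q_\alpha\,(2L)^{1/\alpha}$, but more usefully I want a bound that does \emph{not} degenerate as $L\to\infty$. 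For this I would instead argue by contradiction: suppose $\C$ has diameter larger than some large $d$; then, projecting onto the long axis, $\C$ spans an interval of length at least (roughly) $d-2$ in the $x_1$-direction.

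The key step is a slicing/truncation argument. For $t\in\R$ let $\C_t=\C\cap\{x_1<t\}$ and $g(t)=|\C_t|$, so $g$ is nondecreasing, $g(-\infty)=0$, $g(+\infty)=|\C|=:V$, and by the coarea/slicing formula $g'(t)=\H^1(\C\cap\{x_1=t\})\leq 2$ for a.e.\ $t$, while $P(\C_t)\leq P(\C;\{x_1<t\})+g'(t)$. Comparing $\C$ with $\C_t$ and with $\C\setminus\C_t$, minimality gives
\[
\frac{P(\C)}{V^{1/\alpha}}\leq \frac{P(\C_t)}{g(t)^{1/\alpha}}\,,\qquad
\frac{P(\C)}{V^{1/\alpha}}\leq \frac{P(\C\setminus \C_t)}{(V-g(t))^{1/\alpha}}\,.
\]
Writing $p(t)=P(\C;\{x_1<t\})$, so that $p$ is nondecreasing with $p(+\infty)=P(\C)=:P$, and $P(\C_t)\le p(t)+2$, $P(\C\setminus\C_t)\le (P-p(t))+2$, these inequalities become $P\,g(t)^{1/\alpha}\le (p(t)+2)\,V^{1/\alpha}$ and $P\,(V-g(t))^{1/\alpha}\le (P-p(t)+2)\,V^{1/\alpha}$. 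Adding them and using subadditivity/concavity of $s\mapsto s^{1/\alpha}$ appropriately, one extracts a differential inequality forcing $g$ to grow from a definite fraction of $V$ to the rest of $V$ within a bounded $x_1$-interval, hence forcing $V$ itself to be bounded; since $P\le Q_\alpha V^{1/\alpha}$ and (by the isoperimetric inequality) $P\ge c\,V^{1/2}$, a bound on $V$ also bounds $P$, and then $\diam(\C)^{1/2}\lesssim P$ bounds the diameter. I expect a cleaner route: from $P\,g(t)^{1/\alpha}\le(p(t)+2)V^{1/\alpha}$ at the value $t_0$ where $g(t_0)=V/2$, one gets $p(t_0)\ge P/2^{1/\alpha}-2$, and symmetrically $P-p(t_0)\ge P/2^{1/\alpha}-2$; adding, $P\ge 2P/2^{1/\alpha}-4$, i.e.\ $P(1-2^{1-1/\alpha})\le 4$. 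Since $\alpha<1^*=2$ the constant $1-2^{1-1/\alpha}$ is \emph{negative}... so this naive splitting fails, which tells me the right comparison must be against a \emph{rescaling} of the truncation, not the truncation itself.

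So the actual argument, and the main obstacle, is to run the truncation together with the scale-invariance. Given $\C_t$, rescale it by the factor $\tau(t)=(V/g(t))^{1/2}>1$ so that $\tau(t)\C_t$ has volume $V$; by point~1 its ratio is $\tau(t)^{-(1+\frac{2}{\alpha})\cdot(\ldots)}$ times that of $\C_t$ --- concretely $P(\tau\C_t)/V^{1/\alpha}=\tau^{1-\frac2\alpha}P(\C_t)/g(t)^{1/\alpha}$, and $\tau^{1-2/\alpha}<1$. Minimality of $\C$ then reads $P\le \tau(t)^{1-2/\alpha}\,(p(t)+2)$, equivalently $(p(t)+2)\ge P\,(g(t)/V)^{(2/\alpha-1)/2}$. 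Together with the mirror inequality $(P-p(t)+2)\ge P\,((V-g(t))/V)^{(2/\alpha-1)/2}$, and noting the exponent $\beta:=(2/\alpha-1)/2\in(0,1/2)$ is now positive, summing at $t_0$ with $g(t_0)=V/2$ gives $P+4\ge 2P\cdot 2^{-\beta}$, i.e.\ $P(2^{1-\beta}-1)\le 4$. Since $\beta<1/2<1$ we have $2^{1-\beta}-1>0$, so $P\le 4/(2^{1-\beta}-1)=:P_0(\alpha)$, a bound depending only on $\alpha$. The hard part is making the rescaling-comparison legitimate: $\tau(t)\C_t$ need not fit inside $R_L$. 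To fix this I would not rescale globally but only use that, since the long axis has infinite room (or room of length $L\ge 2$, and we only need a bounded amount), one may rescale $\C_t$ and then translate it to sit inside $R_L$ \emph{provided} its rescaled $x_2$-extent still fits in $(-1,1)$; since $\C_t\subseteq R_L$ has $x_2$-width $\le 2$ and $\tau(t)$ could exceed $1$, this needs care. The remedy is to first cut $\C$ into the finitely many "columns" of width comparable to $1$ using vertical slices where the section length $g'$ is small (such slices exist by an averaging argument since $\int g' <\infty$... actually $g'\le 2$ always, so instead pick slices where $P(\C;\{x_1=t\})=0$, i.e.\ generic $t$), discard the thin rescaling issue by only ever rescaling by a factor $\le \sqrt{2}$ and noting that translating within $R_\infty$ is free in the $x_1$-direction while the $x_2$-direction can be handled by also contracting --- or, most cleanly, by replacing $R_L$ with $R_\infty$ at this stage (allowed since $h_\alpha(R_\infty)\le h_\alpha(R_L)$ and the argument only needs an upper competitor). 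Once $P=P(\C)\le P_0(\alpha)$ is established, the isoperimetric inequality $P(\C)^2\ge 4\pi|\C|$ bounds $|\C|$, and boundedness of both perimeter and volume of a connected (by point~7) planar set of finite perimeter yields a diameter bound $\overline d(\alpha)$, e.g.\ via $\diam(\C)\le \frac12 P(\partial^*\C)$ for connected sets, concluding the proof.
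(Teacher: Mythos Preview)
Your argument has a genuine gap at the rescaling step. You want to compare the $\alpha$-Cheeger set $\C$ with the rescaled truncation $\tau(t)\C_t$, where $\tau(t)=(V/g(t))^{1/2}>1$; but since $\C$ touches $\partial R_L$ (point~2 of Theorem~\ref{properties}) and in particular may have full $x_2$-extent equal to $2$, the set $\tau(t)\C_t$ will in general have width strictly greater than $2$ and hence is \emph{not} a competitor in $R_L$ --- nor in $R_\infty$, which still has width $2$. Your suggested fixes (cutting into columns, ``also contracting'', passing to $R_\infty$) do not address this: any isotropic dilation by a factor exceeding $1$ destroys admissibility in the vertical direction, and an anisotropic contraction would change the perimeter/area relationship you are relying on. Without a legitimate competitor the inequality $P(2^{1-\beta}-1)\le 4$ is not established.

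More importantly, you are working vastly harder than necessary. The paper's proof is two lines and uses only connectedness (point~7 of Theorem~\ref{properties}) together with the geometry of the strip. If $\C=\C^\alpha_L$ has diameter $d$, then on the one hand $P(\C)\ge 2d$ (the perimeter of a connected planar set is at least twice the length of its projection onto any line, hence at least $2d$), and on the other hand $|\C|\le 2d$ because $\C$ is contained in a strip of width $2$. Therefore
\[
h_\alpha(R_L)=\frac{P(\C)}{|\C|^{1/\alpha}}\ge \frac{2d}{(2d)^{1/\alpha}}=(2d)^{1-1/\alpha}\,,
\]
which diverges as $d\to\infty$ since $\alpha>1$. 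Since $h_\alpha(R_L)\le h_\alpha(R_2)$ for all $L\ge 2$ by monotonicity, the diameter is bounded by a constant depending only on $\alpha$. No truncation, no rescaling, no competitor construction is needed.
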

\begin{proof}
For any $L<\infty$, we already know by Theorem~\ref{thmexistence} that there exists an $\alpha$-Cheeger set $\C^\alpha_L$, and this set is connected by Theorem~\ref{properties}. Then, calling $d$ the diameter of this set, we know that $P(\C^\alpha_L)\geq 2d$, and on the other hand $|\C^\alpha_L|\leq 2d$ because the width of the rectangle is $2$. Hence, we can estimate the $\alpha$-Cheeger constant of $\C^\alpha_L$ as
\[
h_\alpha(R_L) = \frac{P(\C^\alpha_L)}{|\C^\alpha_L|^{1/\alpha}} \geq (2d)^{1-\frac 1\alpha}\,.
\]
Since the latter quantity diverges for $d\to\infty$, and on the other hand the $\alpha$-Cheeger constants of the rectangles $R_L$ are uniformly bounded for $L$ large thanks to point~9 of Theorem~\ref{properties}, we obtain the thesis.
\end{proof}

An easy consequence of the above result is that the $\alpha$-Cheeger sets in $R_L$ necessarily have the shape of Figure~\ref{Fig:rectangles}~(right) as soon as $L$ is big enough.
\begin{cor}\label{quarto}
For every $\alpha \in (1,2)$, if $L$ is big enough then the radius of curvature is $r=1$.
\end{cor}
\begin{proof}
Thanks to Lemma~\ref{zeresimo}, we know that either $r=1$, or the diameter of an $\alpha$-Cheeger set is at least $L$. Since this is impossible for $L>\overline d(\alpha)$ according to Lemma~\ref{primo}, we conclude.
\end{proof}

To get a complete characterization of the $\alpha$-Cheeger sets in the rectangles, we have to distinguish for which values of $L$ the shape of the $\alpha$-Cheeger sets is the first or the second possible one, according to Lemma~\ref{zeresimo}. Keeping in mind Corollary~\ref{quinto}, for sure the possibility $r=1$ can only happen if $L\geq M(\alpha)+2$, because otherwise a rectangle of length $M$ topped with two half-circles cannot fit into $R_L$. Actually, we will see that this condition is also sufficient.

\begin{thm}[Structure theorem of $\alpha$-Cheeger sets for rectangles]\label{thm: structure theorem for rectangles}
For any $\alpha \in (1,2)$ the following holds:
\begin{itemize}
\item[(i)]if $L < M(\alpha) + 2$, then $R_L$ has a unique $\alpha$-Cheeger set, obtained from the whole $R_L$ by cutting away the corners with four arcs of radius $r$, being
\begin{equation}\label{eq: raggio per rettangolo corto}
r = \frac{L+2 - \sqrt{(L+2)^2-2(4-\pi)(2-\alpha)L\alpha}}{(4-\pi)(2-\alpha)} < 1\,;
\end{equation}
\item[(ii)]if $L= M(\alpha)+2$, then $R_L$ has a unique $\alpha$-Cheeger set, obtained from the whole $R_L$ by cutting away the corners with four arcs of radius $r=1$;
\item[(iii)]if $L> M(\alpha)+2$, then $R_L$ has not a unique $\alpha$-Cheeger set; more precisely, its $\alpha$-Cheeger sets are all the rectangles of sides $M(\alpha)$ and $2$ topped by two half-disks of radius $1$ which fit into $R_L$.
\end{itemize}
\end{thm}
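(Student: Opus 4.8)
The plan is to determine, for \emph{every} $L\ge2$ (and not merely for $L$ large, as done in Lemma~\ref{secondo} and Corollary~\ref{terzo}), the exact shape of an $\alpha$-Cheeger set of $R_L$, and then to reduce the whole problem to a one-parameter minimization inside each admissible shape.

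First I would collect what the a-priori results already give: by Theorem~\ref{thmexistence} an $\alpha$-Cheeger set $\C^\alpha_L$ exists, and by Theorem~\ref{properties} it is convex, with $C^1$ boundary (no corner of angle $<\pi$), this boundary being the $C^1$-junction of finitely many segments contained in $\partial R_L$ and finitely many circular arcs of a single common radius $r$, each meeting $\partial R_L$ tangentially. The key geometric observation is that along each maximal arc the outer unit normal rotates monotonically, so an arc leaving one long side of $R_L$ tangentially and reaching the opposite one tangentially must turn the normal by exactly $\pi$, hence be a semicircle, and tangency to both lines $y=\pm1$ then forces its radius to equal $1$; conversely, if $r<1$ then no cap of $\partial\C^\alpha_L$ joining the two long sides can consist of arcs only, so $\C^\alpha_L$ must also reach the two short sides. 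Ruling out, by the same monotone-rotation argument together with the inclusion $\C^\alpha_L\subseteq R_L$, the degenerate possibilities (e.g.\ $\C^\alpha_L$ touching only one side of $R_L$, or an arc joining the two short sides, which cannot fit in the strip $\{|y|<1\}$ without degenerating), I expect to be left with the dichotomy: \emph{(A)} $\C^\alpha_L$ is $R_L$ with its four corners rounded off by arcs of a common radius $r\in(0,1]$, touching all four sides; or \emph{(B)} $\C^\alpha_L$ is a stadium, i.e.\ a rectangle of sides $m$ and $2$ with $m\in[0,L-2]$ topped by two half-disks of radius $1$, touching the two long sides (and the short ones precisely when $m=L-2$). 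The two families overlap only through ``shape (A) with $r=1$'' $=$ ``shape (B) with $m=L-2$''. Turning the qualitative a-priori properties into this exact classification is the step I expect to be the real work, although it is elementary.

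Next I would carry out the minimization inside each family. For a set of type (A) with radius $r$ one computes immediately $P=2L+4-2(4-\pi)r$ and volume $2L-(4-\pi)r^2$, so its $\alpha$-Cheeger ratio is
\[
g(r)=\frac{2L+4-2(4-\pi)r}{\bigl(2L-(4-\pi)r^2\bigr)^{1/\alpha}}\,,\qquad r\in(0,1]\,.
\]
Differentiating, the sign of $g'$ on $(0,1]$ is opposite to that of the quadratic $\phi(r):=(4-\pi)(2-\alpha)r^2-2(L+2)r+2\alpha L$, whose discriminant is, after a short manipulation, strictly positive for all $1<\alpha<2$ and $L\ge2$; hence $\phi$ has two positive real roots and $g$ strictly decreases up to the smaller one $r^\ast$ and strictly increases afterwards. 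Since $\phi(0)>0$ and the vertex of $\phi$ lies at $(L+2)/\bigl((4-\pi)(2-\alpha)\bigr)>1$, the point $1$ is always to the left of the vertex, so $r^\ast\le1$ iff $\phi(1)=(4-\pi)(2-\alpha)-4+2(\alpha-1)L\le0$; as this last quantity is increasing in $L$ and vanishes exactly for $L=M(\alpha)+2$, one gets $r^\ast\le1\iff L\le M(\alpha)+2$, with $r^\ast=1$ exactly when $L=M(\alpha)+2$, and the explicit value of $r^\ast$ is precisely~(\ref{eq: raggio per rettangolo corto}). Thus $\min_{(0,1]}g$ is attained only at $r=\min\{r^\ast,1\}$. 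For a set of type (B) with parameter $m$ one has $P=2(m+\pi)$ and volume $2m+\pi$, so the ratio is $f(m)=2(m+\pi)(2m+\pi)^{-1/\alpha}$, which on $[0,\infty)$ has a unique critical point, a strict minimum, at $m=M(\alpha)$ --- this being exactly the computation behind~(\ref{eq: value of M}); hence $\min_{[0,L-2]}f$ is attained only at $m=\min\{M(\alpha),L-2\}$.

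Finally I would compare the two minima, which is immediate once one notices that $g(1)=f(L-2)$. If $L<M(\alpha)+2$ then $r^\ast<1$ and $L-2<M(\alpha)$, so $g(r^\ast)<g(1)=f(L-2)\le f(m)$ for every admissible $m$, and the unique minimizer is the rounded rectangle of~(i) with $r=r^\ast$. If $L=M(\alpha)+2$ then $r^\ast=1$ and the minimum over both families is attained only at the single set of~(ii). If $L>M(\alpha)+2$ then $r^\ast>1$, so $g$ strictly decreases on $(0,1]$ and $\min_{(0,1]}g=g(1)=f(L-2)>f(M(\alpha))$, because $M(\alpha)$ is the strict global minimum of $f$ and $M(\alpha)<L-2$; hence every $\alpha$-Cheeger set must be of type (B) with $m=M(\alpha)$, i.e.\ a rectangle of sides $M(\alpha)$ and $2$ topped by two unit half-disks, placed in any position fitting into $R_L$, which gives~(iii) together with the stated failure of uniqueness. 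As said, the only non-routine point in this scheme is the shape classification of the second paragraph.
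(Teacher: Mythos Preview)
Your argument is correct. The shape dichotomy you extract is exactly the one the paper uses (and indeed it is already implicit in the proofs of Lemma~\ref{secondo} and Corollary~\ref{terzo}, as the paper itself notes). Where you differ is in how you decide, for each $L$, which of the two shapes wins.

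The paper argues as follows: once one knows (from Lemma~\ref{secondo} and Corollary~\ref{terzo}) that for $L$ sufficiently large the $\alpha$-Cheeger set is a stadium of length $M(\alpha)$, any putative rounded-rectangle minimizer $\widetilde\C$ with $r<1$ at some $\widetilde L\ge M(\alpha)+2$ would contain such a stadium and would also sit inside $R_L$ for $L\gg1$; but then the stadium has strictly smaller ratio, contradicting the optimality of $\widetilde\C$ in $R_{\widetilde L}$. This is a short, soft argument that leverages the asymptotic lemmas already proved and avoids any further computation beyond the final derivation of~(\ref{eq: raggio per rettangolo corto}).

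You instead carry out the full one-parameter minimization in both families, locate the exact critical points $r^\ast$ and $M(\alpha)$, and compare the two minima through the identity $g(1)=f(L-2)$, which encodes the overlap of the two families. This is more computational but entirely self-contained: it does not rely on Lemma~\ref{secondo} or Corollary~\ref{terzo} at all, and it yields extra information (strict monotonicity of $g$ and $f$ on the relevant ranges, hence a clean uniqueness statement in every case) that the paper's contradiction argument does not directly give. Your verification that $\phi(1)\le0$ is equivalent to $L\le M(\alpha)+2$ is the crux of the comparison and is correct. Either route is perfectly adequate; the paper's is shorter given what precedes it, yours is more direct and would work even without the preliminary lemmas.
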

\begin{proof}
As noticed above, if $L<M(\alpha)+2$ then it is necessarily $r<1$, and thus by Lemma~\ref{zeresimo} an $\alpha$-Cheeger set must be as in Figure~\ref{rectangles}~(left), that is, the whole rectangle with the four corners cut away with arcs of radius $r$. Hence, an $\alpha$-Cheeger set is completely characterized by the value of $r$: notice that, in principle, there could be more $\alpha$-Cheeger sets, corresponding to different values of $r$. For any $0<t<1$, let us now call $\RR_t$ the rectangle $R_L$ with the four corners cut away with quarters of circle of radius $t$: by construction, the $\alpha$-Cheeger sets are exactly the sets $\RR_t$ minimizing the $\alpha$-Cheeger ratio. Since the perimeter and area of $\RR_t$ can be calculated as
\begin{align}\label{formulaperimearea}
P(\RR_t) = 2L +4 -(8-2\pi)t  \,, && |\RR_t| = 2L - (4-\pi)t^2  \,,
\end{align}
a straightforward minimization argument shows that there is a unique $0<t<1$ minimizing the $\alpha$-Cheeger ratio, and it is given by formula~(\ref{eq: raggio per rettangolo corto}). Hence, point~(i) is concluded. One can check that $r<1$ for every $L<M(\alpha)+2$, and $r\to 1$ for $L \to M(\alpha)+2$.\par

Let us now consider the case $L=M(\alpha)+2$. The same calculations as before ensure that, among all the sets $\RR_t$, the one corresponding to $t=1$ uniquely minimizes the $\alpha$-Cheeger ratio. As a consequence, it is impossible that $r<1$ and then it must be $r=1$. By Corollary~\ref{quinto}, we know that all the $\alpha$-Cheeger sets must be rectangles with height $1$ and width $M$ topped by two half-disks of radius $1$. However, since $L=M(\alpha)+2$, there is only one such set in $R_L$, whose free boundary is tangent to both the lateral sides of the rectangle. Hence, also point~(ii) is concluded.\par

Let us finally consider point~(iii): we will have obtained it, as soon as we exclude that for some $L>M(\alpha)+2$ there is an $\alpha$-Cheeger set with radius of curvature $r<1$. Suppose then, by contradiction, the existence of some $L>M+2$ admitting an $\alpha$-Cheeger set $\C^\alpha_L$ with $r<1$. Let us now take some $L'$ much bigger than $L$: the set $\C^\alpha_L$ is also contained in $R_{L'}$ but, according to Corollary~\ref{quarto}, any $\alpha$-Cheeger set in $R_{L'}$ must have radius of curvature equal to $1$, so $\C^\alpha_L$ is \emph{not} an $\alpha$-Cheeger set in $R_{L'}$. By Corollary~\ref{quinto}, moreover, the $\alpha$-Cheeger sets in $R_{L'}$ are the rectangles of width $M$ topped with half-disks of radius $1$; observe that, since $L>M+2$, there is such a set, call it $\C^\alpha_{L'}$, which is contained also in $R_L$. This immediately gives a contradiction: indeed, the $\alpha$-Cheeger ratio of $\C^\alpha_{L'}$ is strictly better than that of $\C^\alpha_L$, because $\C^\alpha_L$ is not an $\alpha$-Cheeger set of $R_{L'}$; and since both $\C^\alpha_L$ and $\C^\alpha_{L'}$ are contained in $R_L$, this implies that $\C^\alpha_L$ cannot be an $\alpha$-Cheeger set in $R_L$. The proof is then concluded.
\end{proof}

\begin{rem}
By sending $\alpha\to 1$ or $\alpha\to 2$ in the previous result one derives, of course, the already known results for the standard case (when $\alpha=1$) and the trivial results of the case $\alpha=1^*=2$, already discussed at the beginning of Section~\ref{existence}. In particular, our equation~(\ref{eq: raggio per rettangolo corto}) extends the corresponding equation~(11) of~\cite{KawLR}.
\end{rem}

\subsection{Counterexamples given by long and thin rectangles}

Here we briefly discuss how the situation for the rectangles has completely changed from the standard case to the generalized one. In particular, all the claims of Theorem~\ref{stand} fail, at least when $L>M(\alpha)+2$. More in general, we are going to see that sufficiently long rectangles give counterexamples to all the properties of Theorem~\ref{classical} whose analogue is not stated in Theorem~\ref{properties}.\par

First of all, the infinite rectangle $R_\infty$ \emph{does} admit $\alpha$-Cheeger sets, namely, any rectangle of length $M(\alpha)$ topped with two half-disks. In particular, the rectangles $R_L$ are \emph{not} a minimizing sequence for~(\ref{CheegerPbl}), since their $\alpha$-Cheeger constants explode when $L\to \infty$.\par

As soon as $L>M(\alpha)+2$, $R_L$ does \emph{not} admit a unique $\alpha$-Cheeger set, and point~8 of Theorem~\ref{classical} almost completely fails for $\alpha>1$ (notice the difference with point~8 of Theorem~\ref{properties}): it is true that any $\alpha$-Cheeger set is convex, but it is not unique, and in particular it is \emph{not} the union of all the balls with radius $r$.\par

Another property which is easily observed for the rectangles in the standard case is the following: if $L_1<L_2$, then the Cheeger set corresponding to $R_{L_1}$ is contained in the Cheeger set corresponding to $R_{L_2}$. For the $\alpha$-Cheeger problem, this is only partially true: more precisely, any $\alpha$-Cheeger set in $R_{L_1}$ is contained in some $\alpha$-Cheeger set in $R_{L_2}$, but there are $\alpha$-Cheeger sets in $R_{L_2}$ which do not contain any $\alpha$-Cheeger set in $R_{L_1}$. Concerning different values of $\alpha$, the same happens: again by the non-uniqueness and the possible translations, in $R_L$ there are $\alpha_1$-Cheeger sets and $\alpha_2$-Cheeger sets which are not contained one into the other.

\subsection{Computation of $h_\alpha$ for a given rectangle}

In this last short subsection, for the sake of completeness, we briefly compute the $\alpha$-Cheeger constant of some given rectangle $R_L$ depending on the power $\alpha\in (1,2)$; we can assume without loss of generality that $L\geq 2$ (otherwise, it is enough to rotate and rescale the rectangle). Of course, first of all we need to determine when the situation is the one of case~(i), or~(ii), or~(iii) of the structure Theorem~\ref{thm: structure theorem for rectangles}. Actually, we already know that everything depends on the fact that $L$ is bigger or smaller than $M(\alpha)+2$, and recalling~(\ref{eq: value of M}) this is equivalent to $\alpha$ being bigger or smaller than
\[
\bar \alpha(L) = \frac{2(\pi+L-2)}{\pi+2L-4}\,.
\]
It is immediate to observe that $L\mapsto \bar\alpha(L)$ is a strictly decreasing function, and that $\bar\alpha(L) \in (1,2]$, being $\bar\alpha(2)=2$ and $\bar\alpha(+\infty)=1$.
\begin{prop}
The $\alpha$-Cheeger constant of $R_L$ is given by
\begin{equation}\label{eq: h per alpha over the lower bound}
h_\alpha (R_L) =\left\{\begin{array}{ll}
\begin{aligned}\alpha \bigg ( \frac{\pi}{\alpha-1} \bigg)^{1 - \frac{1}{\alpha}}\end{aligned} & \hbox{if $\alpha \geq \bar{\alpha}(L)$}\,,\\[15pt]
\begin{aligned} \frac{2L+4-(8-2\pi)r}{\big(2L-(4-\pi)r^2\big)^{1/\alpha}} \end{aligned}
\qquad\qquad& \hbox{otherwise}\,,
\end{array}\right.
\end{equation}
where $r$ is given by~(\ref{eq: raggio per rettangolo corto}).
\end{prop}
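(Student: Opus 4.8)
The plan is to split into the two regimes dictated by the structure Theorem~\ref{thm: structure theorem for rectangles}, and simply read off the value of the $\alpha$-Cheeger constant from the known shape of the $\alpha$-Cheeger set in each case. Recall that for $L\geq 2$ and $\alpha\in(1,2)$ one has $L\geq M(\alpha)+2$ if and only if $\alpha\leq \bar\alpha(L)$, as observed just before the statement; so the dichotomy $\alpha\geq\bar\alpha(L)$ versus $\alpha<\bar\alpha(L)$ corresponds exactly to cases (ii)--(iii) versus case (i).

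First I would treat the regime $\alpha\geq\bar\alpha(L)$, i.e. $L\geq M(\alpha)+2$. By Theorem~\ref{thm: structure theorem for rectangles}(ii)--(iii), every $\alpha$-Cheeger set is (a translate of) a rectangle of sides $M(\alpha)$ and $2$ topped by two half-disks of radius $1$; call it $\C$. Then $|\C| = 2M(\alpha) + \pi$ and $P(\C) = 2M(\alpha) + 2\pi$. Substituting $M(\alpha) = \tfrac{\pi}{2}\cdot\tfrac{2-\alpha}{\alpha-1}$ gives, after a short simplification, $|\C| = \tfrac{\pi}{\alpha-1}$ and $P(\C) = \tfrac{\alpha\pi}{\alpha-1}$, whence $P(\C) = \alpha|\C|$. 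Therefore
\[
h_\alpha(R_L) = \frac{P(\C)}{|\C|^{1/\alpha}} = \alpha\,|\C|^{1-\frac1\alpha} = \alpha\Bigl(\frac{\pi}{\alpha-1}\Bigr)^{1-\frac1\alpha}\,,
\]
which is the first branch of~(\ref{eq: h per alpha over the lower bound}). (As a sanity check, this matches~(\ref{eq:raggioperalfa}) with $r=1$.)

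For the regime $\alpha<\bar\alpha(L)$, i.e. $L< M(\alpha)+2$, Theorem~\ref{thm: structure theorem for rectangles}(i) says the unique $\alpha$-Cheeger set is $\RR_r$, the rectangle $R_L$ with the four corners cut by quarter-circles of radius $r$ given by~(\ref{eq: raggio per rettangolo corto}). Using the formulas~(\ref{formulaperimearea}), namely $P(\RR_r) = 2L+4-(8-2\pi)r$ and $|\RR_r| = 2L-(4-\pi)r^2$, we get directly
\[
h_\alpha(R_L) = \frac{P(\RR_r)}{|\RR_r|^{1/\alpha}} = \frac{2L+4-(8-2\pi)r}{\bigl(2L-(4-\pi)r^2\bigr)^{1/\alpha}}\,,
\]
which is the second branch. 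This is nothing more than plugging into the definition once the optimal shape and radius are known from the structure theorem. There is essentially no obstacle here: the entire content has already been established in Section~\ref{rectangles}, and the only thing to verify is the elementary algebraic identity $|\C|=\pi/(\alpha-1)$ in the first regime, which makes the first branch collapse to the clean closed form displayed above.
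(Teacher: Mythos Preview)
Your proof is correct and follows essentially the same route as the paper: split according to the structure theorem, compute area and perimeter of the known optimal shape, and simplify using $2M(\alpha)+\pi=\pi/(\alpha-1)$. One small slip to fix: in your first paragraph you write ``$L\geq M(\alpha)+2$ if and only if $\alpha\leq\bar\alpha(L)$'', but since $M(\alpha)$ is decreasing in $\alpha$ the correct equivalence is $\alpha\geq\bar\alpha(L)$ --- which is in fact what you use in the rest of the argument.
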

\begin{proof}
If $\alpha\geq \bar\alpha(L)$, then $L\geq M(\alpha)+2$ and the $\alpha$-Cheeger set is a rectangle of length $M(\alpha)$ plus two half-disks. As a consequence, by~(\ref{eq:raggioperalfa}) together with the fact that $r=1$ and using~(\ref{eq: value of M}), we get
\[
h_\alpha(R_L)=\alpha |\C^\alpha_L|^{1-\frac 1\alpha}
= \alpha \big(2M(\alpha)+\pi\big)^{1-\frac 1\alpha}
= \alpha \bigg(\frac \pi{\alpha-1}\bigg)^{1-\frac 1\alpha}\,,
\]
according with~(\ref{eq: h per alpha over the lower bound}).\par
If $\alpha<\bar\alpha(L)$, instead, we know precisely the shape of the $\alpha$-Cheeger set of $R_L$, which is the whole rectangle with the four corners cut away with arcs of circle of radius $r$, where $r$ is given by~(\ref{eq: raggio per rettangolo corto}). Then, formula~(\ref{eq: h per alpha over the lower bound}) follows straightforwardly.
\end{proof}

\begin{rem}
By sending $L\to \infty$, the first line of formula~(\ref{eq: h per alpha over the lower bound}) gives also the value of the $\alpha$-Cheeger constant $h_\alpha$ for the unbounded rectangle $R_\infty$. If we then send $\alpha\searrow 1$, we obtain that $h_\alpha(R_\infty)$ converges to $1$, which agrees with the fact that $h_1(R_\infty)=1$, as well-known.
\end{rem}

\section{$\alpha$-Cheeger sets for strips\label{strips}}

Lately, due to their importance in many practical applications, especially in engineering and in medicine, a vivid interest has arisen around the waveguides, and their $2$-dimensional counterpart, the strips. Roughly speaking, the waveguides are deformations of cylinders, while strips are deformation of rectangles. The main reason of the interest about these sets in the applications is due to their interesting optical properties; mathematically speaking, the main feature that one is interested in, is the study of the behaviour of the eigenvalues, which in turn is strictly related with the study of the Cheeger constant. More about the meaning and the importance of waveguides and strips can be found in~\cite{DucExn,KreKri} and the references therein, while recent mathematical studies can be found in~\cite{KrePra,LeoPra}. Actually, for the applications the main case is that of ``long and thin'' waveguides or strips.\par

The aim of this short section is to show that the results that we found for rectangles in the last sections can be easily extended to the case of the strips. Let us start with the relevant definitions.

\begin{defin}[Strips]\label{defstrip}
Let $I\subseteq\R$ be an interval, let $\gamma:I\to\R^2$ be a ${\rm C}^{1,1}$ curve, parametrized at unit speed, and let us denote by $\nu: I \to \R^2$ the unit normal vector to the curve $\gamma$. Define the map $\Psi: I \times [-1,1]\to\R^2$ as
\[
\Psi(s,t) = \gamma(s) + t \nu(s)\,.
\]
If the map $\Psi$ is injective, then we say that the image of $\Psi$ is a \emph{strip of half-width $1$}. We call \emph{length of the strip} the length of the interval $I$; moreover, calling $I=(a,b)$, we will refer to $\Psi(a,\cdot)$ and $\Psi(b,\cdot)$ as the \emph{lateral sides} of the strip, as well as to $\Psi(\cdot,\pm 1)$ as the \emph{horizontal sides} of the strip.
\end{defin}

Notice that the injectivity of the map $\Psi$ in particular implies that the modulus of the curvature of $\gamma$ is bounded by $1$; of course, a rectangle is the particular case of a strip when $\gamma$ is the parametrization of a segment. Notice that, depending on the different properties of the curve $\gamma$, all the possible strips can be of four different kinds:
\begin{itemize}
\item if $I = [0,L)$ and $\gamma (0) = \gamma (L)$, then we have a ``closed strip'', usually called \emph{generalized annulus};
\item if $I = (0,L)$ and $\gamma (0) \neq \gamma (L)$, then we have an open \emph{finite strip}; 
\item if $I = (0, +\infty)$, then we have an open \emph{semi-infinite strip};
\item if $I = (-\infty, +\infty)$, then we have an open \emph{infinite strip}.
\end{itemize}

In the two latter cases, we will denote by $S_P$ the bounded strip corresponding to the restriction of $\gamma$, respectively, to $I_P = (0,P)$ and to $I_P = (-P/2, P/2)$. The following results, for the standard Cheeger problem, were proved in~\cite{KrePra,LeoPra}:
\begin{itemize}
\item A generalized annulus is always the unique Cheeger set in itself;
\item A finite strip with $L\geq 9\pi/2$ has always a unique Cheeger set, corresponding to the whole strip with the four corners cut away by arcs of circle;
\item A semi-infinite strip and an infinite strip never have a Cheeger set, but the subsets $S_P$ are always minimizing sequences when $P\to \infty$.
\end{itemize}
In particular, for the annuli and for the finite strips the two properties, which are characteristic of convex sets, are still valid, namely, the uniqueness of the Cheeger set, and the fact that it is the union of all the balls of the correct radius $r=1/h_1(\Omega)$. Roughly speaking, these results ensure that for strips and generalized annuli more or less the same properties as for rectangles and annuli are valid, in the standard Cheeger problem. We can now show that the same is true for the $\alpha$-Cheeger problem.\par

Indeed, let us consider a finite strip $\Omega$ with length $L\geq 9\pi/2$ (this bound on the length is not really needed, it just makes things simpler to treat, see the comment in Remark~\ref{bc}). Let us discuss the possible shapes of the $\alpha$-Cheeger sets. First of all, we know that an $\alpha$-Cheeger set $\C^\alpha_\Omega$ exists, and it cannot be the whole strip because the $\alpha$-Cheeger set cannot contain corners; as a consequence, the intersection of $\partial\C^\alpha_\Omega$ with $\Omega$ is a non-empty union of arcs of circle having radius $r$. Let us now argue as in Lemma~\ref{zeresimo}: by the bound on $L$, an arc cannot join the left and the right sides of $\Omega$, and by the curvature bound and the tangential property given by point~(5) of Theorem~\ref{properties}, an arc cannot joint two points on a same side of $\Omega$. Therefore, every arc should join the two horizontal sides, or an horizontal and a vertical side. Moreover, an arc can join the two horizontal sides only if $r=1$. As a consequence, let us consider separately the two possibilities.\par

If $r<1$, then there is only a possible shape for the $\alpha$-Cheeger set, which is the whole strip with the four corners cut away with arcs of radius $r$. Notice, however, that formulas~(\ref{formulaperimearea}) for the area and perimeter of the whole strip without the four corners cut away are no more valid in general, then also formula~(\ref{eq: raggio per rettangolo corto}) is in general false, and the exact value of $r$ depends not only on $L$ but also on $\gamma$.\par

Suppose, instead, that $r=1$: we claim that, in this case, there are two arcs with amplitude $\pi$ joining the upper and the lower side of $\Omega$, possibly (but not necessarily) also touching the lateral sides, as in Figure~\ref{Fig:rectangles}. Indeed, again as in Lemma~\ref{zeresimo}, if no arc is touching the left side of the strip, then of course the ``left part'' of the free boundary must be an arc between the upper and the lower side. On the other hand, assume that some arc joins the left and the upper sides of the strip, and call $Q$ the center of this arc; then, the distance of $Q$ from the upper side of the strip equals $r=1$, and so the distance of $Q$ from the lower side of the strip is also $1$. As a consequence, the arc of circle which rules out the lower-left corner of the strip, which does not belong to the $\alpha$-Cheeger set, must necessarily be also centered in the same point $Q$: the two arcs, then, are actually a single arc with amplitude $\pi$.\par

Summarizing, we have observed that also for the finite strips there can only be two possible shapes for the $\alpha$-Cheeger sets, namely, the whole strip with the four corners cut away with arcs of radius $r<1$, or some smaller strip of length $M<L$ contained in $\Omega$ topped with two half-disks (we will call these sets ``topped substrips''). In order to distinguish between the two possibilities, we have to calculate area and perimeter of the topped substrips: in fact, a simple calculation (for instance done in~\cite[Proposition~3.5]{LeoPra}) ensures that \emph{all} the topped substrips of width $M$ have the same area and perimeter, regardless of their shape, namely $2M+\pi$ and $2M+2\pi$ (as in the case of the rectangle, which is a particular strip). As a consequence, the same calculation as in Corollary~\ref{quinto} ensures that a topped substrip can be an $\alpha$-Cheeger set only if $M$ is given by~(\ref{eq: value of M}). In addition, the very same argument as in the proof of Theorem~\ref{thm: structure theorem for rectangles} still ensures that, if a topped substrip with the correct value of $M$ fits into $\Omega$, then it must necessarily be an $\alpha$-Cheeger set for $\Omega$. In other words, the $\alpha$-Cheeger sets are all the topped substrips of width $M(\alpha)$, if there are any, and otherwise the strips with the corners cut away.\par

Finally, let us discuss the existence of topped substrips of width $M(\alpha)$ fitting into the strip $\Omega$: in the case of rectangles, of course this happened if and only if $L\geq M(\alpha)+2$, but for the general strips the situation is different. Of course there can be no topped substrip if $L<M(\alpha)+2$, but it is also possible that a strip has length $L>M(\alpha)+2$ and no topped substrips with width $M(\alpha)$; one can easily notice that such a topped substrip exists for sure only when $L\geq M(\alpha)+\pi$, and this bound is sharp and corresponds to a curve $\gamma$ which has exactly curvature $1$ around its extremes. Summarizing, we have proved the following result.
\begin{thm}[Structure theorem of $\alpha$-Cheeger sets for open strips]
Let $\Omega$ be an open strip of length $L\geq 9\pi/2$ (with $L=+\infty$ for semi-infinite or infinite strips). Then, for any $\alpha\in (1,2)$ the following hold:
\begin{itemize}
\item[(i)]{if $L < M(\alpha) + 2$ then $\Omega$ has a unique $\alpha$-Cheeger set found by cutting away the corners of $\Omega$ with arcs of radius $r<1$, where $r$ satisfies~(\ref{eq:raggioperalfa}), but not necessarily~(\ref{eq: raggio per rettangolo corto});}
\item[(ii)]{if $L\geq M(\alpha) + \pi$ then $\Omega$ has not a unique $\alpha$-Cheeger set; more precisely, its Cheeger sets are all the topped substrips of length $M(\alpha)$ which fit into $\Omega$;}
\item[(iii)]{If $M(\alpha)+2\leq L < M(\alpha)+\pi$, then the $\alpha$-Cheeger sets of $\Omega$ are all the topped substrips of length $M(\alpha)$ which fit into $\Omega$, if there is any such set; otherwise, there is a unique Cheeger set, which is as in case~{\rm (i)}.}
\end{itemize}
\end{thm}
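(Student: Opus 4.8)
The plan is to rerun, with three localized modifications, the analysis of Section~\ref{rectangles}; throughout I fix a finite open strip $\Omega$ of length $L\geq 9\pi/2$ (the semi-infinite and infinite cases are recovered at the end by the $R_\infty$-type limiting argument). First I would record the two ``flat'' facts that survive the bending of $\gamma$. The Jacobian of $\Psi(s,t)=\gamma(s)+t\nu(s)$ equals $1-t\kappa(s)$, which is $\geq0$ because $|\kappa|\leq1$ (forced by injectivity of $\Psi$); hence $\int_{-1}^{1}(1-t\kappa)\,dt=2$, so the substrip over an arc of $\gamma$ of length $\ell$ has area exactly $2\ell$, while its two long sides have lengths $\ell\mp\int\kappa$, of total $2\ell$. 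Consequently a topped substrip of length $M$ has area $2M+\pi$ and perimeter $2M+2\pi$ \emph{independently of} $\gamma$, and the relation $\alpha|\C^\alpha_\Omega|=P(\C^\alpha_\Omega)$ that follows from~(\ref{alphacurvature}) when $r=1$ again forces $M=M(\alpha)$, as in~(\ref{eq: value of M}). By contrast, cutting the four corners of $\Omega$ by arcs of radius $t$ produces perimeter and area which are \emph{not} the $\gamma$-independent expressions~(\ref{formulaperimearea}), because near each corner the long side is a curved offset of $\gamma$, not a segment; hence~(\ref{eq: raggio per rettangolo corto}) fails and the exact $r$ depends on $\gamma$, whereas the curvature identity~(\ref{eq:raggioperalfa}) does not.

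Next I would establish the structural dichotomy, exactly as in Lemma~\ref{secondo} and Corollary~\ref{terzo}. An $\alpha$-Cheeger set $\C^\alpha_\Omega$ exists (Theorem~\ref{thmexistence}) and is connected; it cannot contain the right-angle corners of $\Omega$ (point~6 of Theorem~\ref{properties}), so its free boundary is a non-empty finite union of circular arcs of one common radius $r$, meeting $\partial\Omega$ tangentially at regular points (points 3--5). The diameter bound of Lemma~\ref{primo} holds verbatim ($P\geq2d$ and $|\cdot|\leq2d$ give $h_\alpha(\Omega)\geq(2d)^{1-1/\alpha}$, and $h_\alpha(\Omega)$ is bounded because $\Omega$, being long, contains a fixed-size disk), so once $L$ is large $\C^\alpha_\Omega$ cannot reach both short ends; then some interior arc spans the width tangentially to both long sides, which lie at normal distance $2$, hence has radius $1$. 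Together with the no-crossing argument for corner arcs this gives $r\leq1$ and only two possibilities: if $r=1$ the set is a topped substrip of length $M(\alpha)$; if $r<1$ it touches both short ends and equals $\Omega$ with the four corners cut by radius-$r$ arcs, which I denote $\RR_r$.

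Then I would dispatch the three cases. For (i), $L<M(\alpha)+2$: a topped substrip of length $M(\alpha)$ cannot fit in $\Omega$, so $r<1$ and $\C^\alpha_\Omega=\RR_r$ with $r$ solving~(\ref{eq:raggioperalfa}) (but in general not~(\ref{eq: raggio per rettangolo corto})); for uniqueness note that two $\alpha$-Cheeger sets $\RR_{r_1},\RR_{r_2}$ share the number $h_\alpha(\Omega)$, so by~(\ref{eq:raggioperalfa}) each $r_i$ is a fixed point of the strictly decreasing map $r\mapsto\frac{\alpha}{h_\alpha(\Omega)}|\RR_r|^{1-1/\alpha}$, whence $r_1=r_2$. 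For (ii), $L>M(\alpha)+\pi$, and for (iii) when a topped substrip does fit: I would rerun the contradiction of Theorem~\ref{thm: structure theorem for rectangles}. If $r<1$, embed $\Omega$ into a much longer strip $\Omega'\supseteq\Omega$ (prolonging $\gamma$ by segments); by the diameter bound the $\alpha$-Cheeger set of $\Omega'$ is a topped substrip, so $h_\alpha(\Omega')$ equals the common $\alpha$-ratio $\rho_M$ of all topped substrips of length $M(\alpha)$; since a topped substrip fits in $\Omega$ we get $h_\alpha(\Omega)\leq\rho_M$, while $h_\alpha(\Omega)\geq h_\alpha(\Omega')=\rho_M$ by point~9, so $h_\alpha(\Omega)=\rho_M$; but $\C^\alpha_\Omega\subseteq\Omega'$ is not $\alpha$-Cheeger in $\Omega'$, so its $\alpha$-ratio exceeds $\rho_M=h_\alpha(\Omega)$, a contradiction. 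Hence $\C^\alpha_\Omega$ is a topped substrip of length $M(\alpha)$; conversely every such set fitting in $\Omega$ has $\alpha$-ratio $\rho_M=h_\alpha(\Omega)$, so these are precisely the $\alpha$-Cheeger sets, and when $L>M(\alpha)+\pi$ there is room to translate, whence non-uniqueness. Case (iii) with no fitting topped substrip reduces to case (i). Finally, for the semi-infinite and infinite strips a topped substrip of length $M(\alpha)$ is an $\alpha$-Cheeger set of $S_P$ for $P\gg1$ by (ii), its $\alpha$-ratio $\rho_M$ is independent of $P$, and letting $P\to\infty$ gives, exactly as for $R_\infty$, that it is an $\alpha$-Cheeger set of $\Omega$ and the only kind. (For $\alpha=1$ one has $M(1)=+\infty$ and recovers the classical picture of~\cite{KrePra,LeoPra}.)

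I expect the genuinely new ingredient---and the main obstacle---to be the ``simple geometric argument'' hidden in case (iii): that for every admissible $\gamma$ the least length of $\Omega$ for which a topped substrip of length $M(\alpha)$ fits lies in $[M(\alpha)+2,\,M(\alpha)+\pi]$, i.e. that the two half-disk caps never require less extra length than in the straight case, nor more than an arc-length $\pi$ at each end; related is the need, in the contradiction above, to ensure that a finite strip can always be prolonged---keeping $\Psi$ injective---past the diameter threshold $\overline d(\alpha)$. Both are elementary planar geometry, but must be argued using the embedding of $\Psi$ and the bound $|\kappa|\leq1$ with some care.
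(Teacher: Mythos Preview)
Your proposal is correct and follows essentially the same route as the paper: reduce to the dichotomy ``$r=1$ gives a topped substrip of length $M(\alpha)$, $r<1$ gives the whole strip minus the four corners'', note that topped substrips keep the $\gamma$-independent area $2M+\pi$ and perimeter $2M+2\pi$ so~(\ref{eq: value of M}) survives while~(\ref{formulaperimearea}) and~(\ref{eq: raggio per rettangolo corto}) do not, and rerun the contradiction of Theorem~\ref{thm: structure theorem for rectangles} to show that whenever a topped substrip fits it wins. You actually supply more detail than the paper does on three points the paper only gestures at: the Jacobian computation giving the area/perimeter invariance, a clean uniqueness argument in case~(i) via the fixed-point of the strictly decreasing map $r\mapsto\frac{\alpha}{h_\alpha(\Omega)}|\RR_r|^{1-1/\alpha}$, and the need to prolong $\gamma$ to embed $\Omega$ in a longer strip $\Omega'$. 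Your identification of the ``simple geometric argument'' bounding the fitting threshold in $[M(\alpha)+2,\,M(\alpha)+\pi]$ as the genuinely new (and only sketched) ingredient matches the paper exactly---the paper too leaves this to the reader.
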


To conclude, let us consider the case of a generalized annulus. There are again two possibilities for an $\alpha$-Cheeger set $\C^\alpha_\Omega$: either it coincides with $\Omega$, thus the whole annulus is the $\alpha$-Cheeger set of itself, or the boundary $\partial\C^\alpha_\Omega$ intersects the interior of $\Omega$, and thus it is a finite union of arcs with radius $r\leq 1$. However, for an annulus $\Omega$, the boundary of $\Omega$ is done by two disconnected curves (the ``internal'' and the ``external'' boundary), and an immediate geometric argument ensures that an arc of circle with radius $r< 1$ cannot connect two disctinct points of the same connected component. As a consequence, it must be $r=1$, and thus an $\alpha$-Cheeger set not coinciding with $\Omega$ is again some topped substrip of length $M(\alpha)$; as before, all such sets have the same perimeter and area, so they are all $\alpha$-Cheeger sets (or none of them is so). Moreover, this time it is no more true that such a topped substrip which fits into $\Omega$ is automatically an $\alpha$-Cheeger set: indeed, if the two half-circles are almost tangent, then an immediate calculation ensures that the whole annulus has a strictly better $\alpha$-Cheeger ratio. Actually, since every topped substrip has area $2M(\alpha)+\pi$ and perimeter $2M(\alpha)+2\pi$, while the whole annulus has both perimeter and area equal to $2L$, it is clear that the topped substrips are better than the whole annulus (thus, they are $\alpha$-Cheeger sets) if and only if
\begin{equation}\label{eq: bound annulus}
\frac{2\pi +2M(\alpha)}{(2M(\alpha)+ \pi)^{1/\alpha}} \leq \frac{2L}{(2L)^{1/\alpha}}\,.
\end{equation}
We can then conclude with the structure result for the generalized annuli.
\begin{thm}[Structure theorem of $\alpha$-Cheeger sets for generalized annuli]
Let $\Omega$ be a generalized annulus of length $L\geq 9\pi/2$. Then, for any $\alpha \in (1,2)$ the following hold:
\begin{itemize}
\item[(i)]{if there are no topped substrips of length $M(\alpha)$ which fit in $\Omega$ (in particular, this always happens if $L< M(\alpha)+2$), or if such sets exist but the opposite inequality in~(\ref{eq: bound annulus}) holds true, then the only $\alpha$-Cheeger set in $\Omega$ is the whole $\Omega$ itself;}
\item[(ii)]{if topped substrips of length $M(\alpha)$ which fit in $\Omega$ exist and the strict inequality in~(\ref{eq: bound annulus}) holds, then such sets are all and only the $\alpha$-Cheeger sets in $\Omega$;}
\item[(iii)]{if topped substrips of length $M(\alpha)$ which fit in $\Omega$ exist and equality in~(\ref{eq: bound annulus}) holds, then such sets, as well as the whole $\Omega$ itself, are all and only the $\alpha$-Cheeger sets in $\Omega$.}
\end{itemize}
\end{thm}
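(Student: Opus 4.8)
The plan is to reduce the minimization to a comparison between just two explicit competitors and then to read off the three cases by comparing their $\alpha$-Cheeger ratios, i.e.\ by looking at the sign in~(\ref{eq: bound annulus}). The two competitors are the whole annulus $\Omega$, whose $\alpha$-Cheeger ratio is $2L\,(2L)^{-1/\alpha}$, and, when at least one of them fits into $\Omega$, a topped substrip of length $M(\alpha)$; all topped substrips of that length have area $2M(\alpha)+\pi$ and perimeter $2M(\alpha)+2\pi$, so they share the common $\alpha$-Cheeger ratio $(2\pi+2M(\alpha))\,(2M(\alpha)+\pi)^{-1/\alpha}$. Granting that every $\alpha$-Cheeger set of $\Omega$ has one of these two shapes, $h_\alpha(\Omega)$ is the smaller of the two numbers (if a topped substrip fits) or $2L\,(2L)^{-1/\alpha}$ (if none fits), and the three cases of the statement follow at once.

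First I would establish the reduction, which is the substantial point. Theorem~\ref{thmexistence} provides an $\alpha$-Cheeger set $\C^\alpha_\Omega$; if $\C^\alpha_\Omega=\Omega$ there is nothing to prove, so I assume $\partial\C^\alpha_\Omega$ meets the interior of $\Omega$. By point~4 of Theorem~\ref{properties} the free boundary is a finite union of circular arcs of one common radius $r$, with $r\leq 1$ as in Section~\ref{rectangles}, and by point~5 of Theorem~\ref{properties} each arc is tangent to $\partial\Omega$ at its two endpoints. Here I would invoke the geometric observation already used for strips immediately before the statement: since $\partial\Omega$ is the disjoint union of the two closed boundary curves of the annulus, no arc of radius $r<1$ tangent to $\partial\Omega$ at both of its endpoints is possible (it can join neither two points of one and the same boundary curve nor a point of one curve to a point of the other, the latter being also excluded because the two curves are at mutual distance $2$). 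Hence $r=1$; then connectedness of $\C^\alpha_\Omega$ (point~7 of Theorem~\ref{properties}) forces $\C^\alpha_\Omega$ to be a substrip capped by two half-circles of radius $1$, and the computation of Corollary~\ref{terzo} — which only used~(\ref{eq:raggioperalfa}) with $r=1$ together with the values of area and perimeter — pins down its length to be exactly $M(\alpha)$, given by~(\ref{eq: value of M}).

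With the reduction available, I would conclude by the three-way comparison. If no topped substrip of length $M(\alpha)$ fits into $\Omega$ — which in particular happens whenever $L<M(\alpha)+2$ — then $\Omega$ is the only competitor and hence the unique $\alpha$-Cheeger set, which is the first alternative in case~(i). If a topped substrip does fit, I compare $(2\pi+2M(\alpha))\,(2M(\alpha)+\pi)^{-1/\alpha}$ with $2L\,(2L)^{-1/\alpha}$, that is, I look at~(\ref{eq: bound annulus}): if the strict inequality holds then every $\alpha$-Cheeger set must be a topped substrip and, conversely, each topped substrip of length $M(\alpha)$ fitting into $\Omega$ is an $\alpha$-Cheeger set, since they all realize the same ratio, which is case~(ii); if the opposite strict inequality holds then $\Omega$ is strictly the best and hence the unique $\alpha$-Cheeger set, the second alternative in case~(i); and if equality holds then $\Omega$ together with all the topped substrips of length $M(\alpha)$ fitting into $\Omega$ are exactly the $\alpha$-Cheeger sets, which is case~(iii). (For $\alpha=1$ one reads $M(\alpha)=+\infty$, so no topped substrip fits and one is always in case~(i), recovering the known fact that a generalized annulus is its own unique Cheeger set.)

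The only real obstacle I foresee is the geometric step excluding arcs of radius $r<1$: one has to use the tangency of the free boundary to $\partial\Omega$ at the two endpoints of each arc, the decomposition of $\partial\Omega$ into two disjoint closed curves, and their mutual distance $2$, in order to rule out every possible configuration of such an arc. Once this is settled, everything else is bookkeeping: the classification of competitors is inherited from the rectangle and strip analysis of the previous sections, the relevant areas and perimeters are explicit, and matching the three cases to the sign of~(\ref{eq: bound annulus}) is a routine comparison of two elementary functions of $L$ and $\alpha$.
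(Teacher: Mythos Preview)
Your proposal is correct and follows essentially the same line as the paper's argument, which is given in the paragraph immediately preceding the theorem: reduce any $\alpha$-Cheeger set to either the whole annulus or a topped substrip of length $M(\alpha)$ via the geometric exclusion of free-boundary arcs of radius $r<1$, and then compare the two explicit $\alpha$-Cheeger ratios according to~(\ref{eq: bound annulus}). You are in fact slightly more explicit than the paper---you treat separately the case of an arc joining the two boundary components, you invoke connectedness and the computation of Corollary~\ref{terzo} for the length $M(\alpha)$---and you correctly flag the exclusion of $r<1$ as the only genuinely nontrivial step.
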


\begin{rem}\label{bc}
Let us briefly comment about the bound of the length $L\geq 9\pi/2$. Exactly as in~\cite{LeoPra}, the meaning of this bound is only to ensure that the strip has actually four corners, as well as to exclude that an arc of circle in the free boundary can connect the left and the right side of the boundary. Indeed, if we consider for instance a curve $\gamma$ given by an arc of circle of radius $1$ and angle $\pi/2$, then the corresponding strip is a quarter of a disk, which has only three corners instead of four. As a consequence, if one wants to consider all the strips without a lower bound on the length, then some simple but boring modifications in the claims and in the proofs are needed. Since the claims are already technical enough, we preferred to consider only the case of lengths bounded from below, also because, as explained above, the whole interest in the study of the strips comes for extremely long ones. For the same reason, we did not try to optimize the lower bound on the length, the number $9\pi/2$ is probably too high but we did not find it interesting to obtain a better constant.
\end{rem}

\begin{rem}
Notice that, in the limit cases when $\alpha=1$ or $\alpha=2$, equation~(\ref{eq: bound annulus}) is respectively never verified or always verified. As a consequence, for the standard Cheeger problem we have found the already known result cited above, that is, the whole generalized annulus is always the unique Cheeger set of itself; on the other hand, for the limit case $\alpha=2$, we deduce the fact that the whole annulus is never $\alpha$-Cheeger in itself (which is obvious because, as observed above, when $\alpha=2$ then the $\alpha$-Cheeger sets are all and only the balls, of any radius, contained in $\Omega$).
\end{rem}

\bibliographystyle{plain}

\bibliography{alpach}

\end{document}